\newif\ifpictures
\picturestrue

\documentclass[11pt]{amsart}
\usepackage{latex_base}
\usepackage{macros}
\usepackage{makecell} 

\author{Mareike Dressler}
\address{Mareike Dressler, School of Mathematics and Statistics, University of New South Wales, Sydney, NSW 2052, Australia.}
\email{m.dressler@unsw.edu.au}

\author{Qi Wang}
\address{Qi Wang, School of Mathematics and Statistics, University of New South Wales, Sydney, NSW 2052, Australia.}
\email{qi.wang21@student.unsw.edu.au}

\subjclass[2020]{Primary: 90C23, 90C26, 14P99; Secondary: 90C30, 12D15, 52A20, 26C99}

\keywords{Global optimization, polynomial optimization, convex relaxations, semidefinite and relative entropy hierarchies, sums of squares, sums of nonnegative circuit polynomials, SONC-convexity}

\title[B-SOS+SONC Hierarchy]{A Bounded Degree SOS Plus SONC Hierarchy for Polynomial Optimization} 

\begin{document}

\begin{abstract}
We propose a bounded degree SOS+SONC hierarchy for constrained polynomial optimization, termed B-SOS+SONC. Starting from Lasserre's bounded-degree SOS framework, we enlarge the certificate cone from SOS to the recently introduced SOS+SONC cone, thereby combining the algebraic strength of semidefinite relaxations with the sparse structure captured by circuit polynomials. We show that, for each fixed certificate degree, the resulting hierarchy is complete, that is, its optimal values are monotone and converge to the global optimum. Moreover, we derive an explicit SDP-REP reformulation, so that each relaxation can be solved within a tractable convex optimization framework over semidefinite and relative entropy cones.

Beyond the optimization hierarchy itself, we investigate structural properties of the SONC cone and introduce the notions of first-order and second-order SONC-convexity. This leads to a new sufficient condition for first-level exactness of the B-SOS+SONC hierarchy. Numerical experiments illustrate that the proposed hierarchy often yields tighter lower bounds than the B-SOS relaxation while remaining tractable.
\end{abstract}

\maketitle

\section{Introduction}\label{sec: Introduction}
Polynomial optimization over basic closed semialgebraic sets is a fundamental problem at the intersection of real algebraic geometry, convex optimization, and applications. Given real, multivariate polynomials
\(f,g_1,\ldots,g_m \in \R[\xb]\), one aims to compute
\[
        f_K^{\star} = \inf\{ f(\xb) : \xb \in K\} , \text{ with }
        K = \{\xb \in \R^n : g_j(\xb) \geq 0,\ j=1,\ldots,m\}.
\]
It is well-known that solving such problems is in general NP-hard. One approach involves constructing a sequence of  tractable convex relaxations, a so-called hierarchy, that converges to the global optimum.  The quality and efficiency of such relaxations depend crucially on the underlying certificate of nonnegativity.

A classical nonnegativity certificate is given by \emph{sums of squares (SOS)}. Building on \emph{Putinar's Positivstellensatz}~\cite{putinar1993positive}, the SOS-hierarchy \cite{lasserre2001global,Parrilo:PhD}, often referred to as \emph{Lasserre's hierarchy},  provides a systematic sequence of \emph{semidefinite programming (SDP)} relaxations whose optimal values converge to the global optimum under standard compactness assumptions. In fact, the convergence is generically finite~\cite{Nie:FlatTruncation,Nie:FiniteConvergence}. Although this relaxation comes with strong convergence guarantees, its practical limitations are well known.  Each level in the relaxation requires solving an SDP involving positive semidefinite matrices of size $\binom{n+d}{d}$, thus quickly becoming too large as either the number of variables $n$ or the degree $d$ increases.

A different route is offered by \emph{linear programming (LP)} relaxations based on the \emph{Krivine-Stengle Positivstellensatz}~\cite{Krivine,Stengle}, see also Theorem~\ref{thm:Krivine-Stengle}. After a suitable normalization \(0 \leq g_j(\xb) \leq 1\) on \(K\), positivity on \(K\) can be certified by nonnegative combinations of products
\[
       g_1^{a_1}\cdots g_m^{a_m}
        (1-g_1)^{b_1}\cdots (1-g_m)^{b_m}.
\]
Truncating this certificate yields a hierarchy of linear programs. From a computational perspective, replacing semidefinite programs by linear programs is highly attractive. However, this improvement comes at a price: the resulting certificates are much more restrictive, finite convergence cannot be achieved for most convex problems~\cite{Lasserre:SDPvsLP,Lasserre:Book-Moments}, and the corresponding linear programs can be numerically delicate.

\smallskip
In~\cite{lasserre2013lagrangian}, Lasserre proposed the \emph{bounded degree SOS (B-SOS) hierarchy} to circumvent these issues. It relaxes Krivine-Stengle's certificate to an SOS condition of a fixed, prescribed degree. That is, it keeps the size of the associated SDP manageable, by a priori fixing their degree to a specific parameter $k$. In this way, the hierarchy still progresses through the Krivine-Stengle products, while the size of the semidefinite block is fixed in advance by the user. This combines two desirable features: the relaxation keeps a bounded SDP component, yet retains substantially more expressive power than the LP hierarchy. Moreover, it exhibits finite convergence at the first level for important classes of convex polynomial optimization problems, for instance under SOS-convexity assumptions. The effective implementation of this approach can be found in~\cite{lasserre2017bounded} and additional sparsity was considered in~\cite{weisser2018sparse}.  This bounded-degree viewpoint has since led to further variants, bounded-degree hierarchies based on \emph{scaled-diagonal dominant sums of squares (SDSOS)}  and \emph{second order cone programming (SOCP) } relaxations~\cite{chuong2019new}, and recently based on \emph{separable plus lower degree (SPLD) polynomials}~\cite{jiao2025spld}.

\medskip
In parallel, \emph{sums of nonnegative circuit polynomials (SONC)} have emerged as an alternative certificate of polynomial nonnegativity, see~Section~\ref{subsubsec:SONC}.
Formally introduced in~\cite{iliman2016amoebas}, SONC certificates build on Reznick's agiforms~\cite{Reznick:AGI} and are closely related to independent approaches in~\cite{Fidalgo:Kovacec,PanteaEtA:GlobalInjectivityAndMultipleEquilibria}, as well as to the \emph{SAGE cone} for signomials~\cite{ChandrasekaranShah:REPforSignomialOptimization}. In contrast to SOS, which is algebraic in nature and tied to Gram matrix representations, SONC certificates are governed by the geometry of the Newton polytope and by circuit number inequalities. This makes them particularly suitable for sparse polynomials. Membership in the SONC cone can be formulated via \emph{relative entropy programming (REP)}~\cite{dressler2017positivstellensatz}, and SONC-based methods have been successfully used in polynomial optimization. The SONC approach can also be formulated using  \emph{geometric programming}~\cite{IlimanDeWolff_LowerBounds}, \emph{second order cone programming}~\cite{MagronWang_SecondOrderConeProgramming}, or \emph{duality theory}~\cite{Papp_DualitySONC}. Positivstellensatz-type results for SONC and SAGE further show that these certificates give rise to complete hierarchies for constrained polynomial and signomial optimization; see, for example,~\cite{dressler2017positivstellensatz,WJYP2020,Dressler:Murray}.

Importantly, SOS and SONC are independent certificates in general: neither cone contains the other outside the classical Hilbert cases. Consequently, their Minkowski sum, the \emph{SOS+SONC cone}, provides a strictly richer nonnegativity certificate than either SOS or SONC alone. Recent work has initiated the systematic study of this combined cone~\cite{dressler2025study} and its use in unconstrained polynomial optimization~\cite{Schick2025-01-31squar-72682}. 

\smallskip
This suggests a natural question: can the bounded degree SOS philosophy be combined with the additional expressive power of SONC certificates? The purpose of this paper is to answer this question affirmatively. We introduce a \emph{bounded degree SOS+SONC (B-SOS+SONC) hierarchy} for constrained polynomial optimization, see Section~\ref{Sec: The bounded-SOS+SONC hierarchy and finite convergence}. Starting from the Krivine-Stengle Lagrangian used in the B-SOS hierarchy, we replace the fixed degree SOS certificate by a fixed degree SOS+SONC certificate. Thus, the resulting relaxation preserves the bounded degree structure of B-SOS, while enlarging the certificate cone by allowing a SONC component. Computationally, this leads to a convex program over the product of semidefinite and relative entropy cones. Conceptually, it gives a bounded degree hierarchy that can exploit both algebraic SOS structure and sparse circuit structure. 

\medskip
Our contributions are as follows. 
First, we define the bounded SOS+SONC relaxation for constrained polynomial optimization and show that it forms a complete hierarchy (Theorem~\ref{thm:hierarchyB-SOS+SONC}). For every fixed certificate degree, the resulting sequence of lower bounds is monotone and converges to the global optimum. Since the SOS cone is contained in the SOS+SONC cone, the new hierarchy yields bounds that are at least as strong as those of the corresponding B-SOS hierarchy.

Second, we derive an explicit SDP-REP formulation of the relaxation. This shows that the B-SOS+SONC relaxation remains tractable within conic convex optimization: the SOS part is represented by a semidefinite constraint, while the SONC part is represented by relative entropy constraints. We also discuss an implementation and compare the resulting bounds with those obtained from B-SOS relaxations.

Third, we investigate finite convergence at the first level. The new hierarchy inherits the known first-level exactness result under SOS-convexity. More importantly, the SONC component leads us to introduce and study SONC-based convexity notions, which is our main theoretical contribution. We distinguish first-order and second-order SONC-convexity, establish structural closure properties of the SONC cone needed for their analysis, and prove that first-order SONC-convexity yields a new sufficient condition for first-level exactness. This provides a SONC-driven mechanism for finite convergence which is genuinely different from the classical SOS-convexity argument.

\medskip
The paper is organized as follows. Section~\ref{sec:Preliminaries} recalls basic notation,  necessary background on nonnegativity certificates, and the bounded degree SOS relaxation. 
In Section~\ref{Sec: The bounded-SOS+SONC hierarchy and finite convergence}, we introduce the bounded degree SOS+SONC relaxation for a constrained polynomial optimization problem, prove its convergence, and derive its SDP-REP formulation.
Section~\ref{Sec: SONC-convexity} is devoted to SONC-convexity:  we study structural invariance properties of the SONC cone, introduce the novel concept of SONC-convexity, where we distinguish between first-order and second-order SONC-convexity, and analyze their relationship. Further, we establish the corresponding first-level exactness result, see Section~\ref{subsec: 1level_exactness_SONC-conv}. 
In Section~\ref{sec:numerics}, we discuss the implementation and present numerical experiments. 
We conclude with open questions and possible directions for further work in Section~\ref{sec:conclusion}.

\section{Preliminaries} \label{sec:Preliminaries}

For $m \in \N$, we write $\struc{[m]}\coloneqq\{1,\ldots, m\}$ for the discrete set containing all integers from $1$ to $m$ and we set $\struc{[0]}\coloneqq\emptyset$. Further, we use $\struc{|\alpb|} \coloneqq \sum_{i=1}^n \alpha_i$, for $\alpha \in \N^n$ and $\struc{\N_s^n} \coloneqq \{\alpb \in \N^n: |\alpb| \leq s\}$. 

The \emph{convex hull} of a finite set $A \subseteq \N^n$, is denoted by $\struc{\conv(A)}$, and we write $\struc{V(A)}$ for the set of \emph{vertices} of $\conv(A)$. 

Let $\struc{\R[\xb]}\coloneqq \R[x_1,x_2,\dots,x_n]$ denote the ring of real polynomials in n variables, and let $\struc{\R[\xb]_{n,d}}$ be the vector space of polynomials in $n$ variables of degree at most $d$. 

The \emph{support} of a polynomial $f(\xb) = \sum_{\alpb} f_{\alpb} \xb^{\alpb}$, where $\xb^{\alpb} \coloneqq x_1^{\alpha_1}\cdots x_n^{\alpha_n}$, is defined as $\struc{\supp(f)} := \{\alpb \in \N^n: f_{\alpb} \neq 0\}$,
and its \struc{\emph{Newton polytope}} is $\struc{\New(f)} \coloneqq \conv(\supp(f))$. 
A term $f_{\alpb}\xb^{\alpb}$ is called a \struc{\emph{monomial square}} if $f_{\alpb} \in \R_{\geq 0}$ and $\alpb \in 2\N^n$. We refer to a polynomial $f$ as \struc{\emph{separable}} if it admits a decomposition $f(\xb)=\sum_{i=1}^n f_i(x_i)$. Specifically in the context of sparsity, we oftentimes consider polynomials supported on a specific finite set $\cA\subseteq \N^n$, and denote by $\struc{\R[\cA]}$ the vector space of real polynomials supported on  $\cA$. For a polynomial $f\in \R[\cA]$, we set $\struc{\cA(f)} \coloneqq \New(f) \cap \N^n$ and $\struc{\mathcal{H}(f)} \coloneqq \frac{1}{2}\cA(f) \cap \N^n$.

We use the superscript $\struc{\star^h}$ to indicate that the object $\star$ (typically polynomial or cone) is regarded in the homogeneous setting. 

Let $\struc{\Sc^n}$ denote the space of  $n \times n$ real symmetric matrices and write $Q \succeq 0$ if $Q$ is  \struc{\emph{positive semidefinite (PSD)}}. 
The \struc{\emph{relative entropy function}} is defined as follows:
\begin{align*}
        D\colon \R_{\geq 0}^m \times \R_{\geq 0}^m \to \R\cup\{+\infty\}, \quad \struc{D(\nub,\mub)} = \sum_{i=1}^m \nu_i \log\left( \frac{\nu_i}{\mu_i}\right).
\end{align*}
By convention, we set
\begin{equation*}
    \nu_i \log\left( \frac{\nu_i}{\mu_i}\right) \coloneqq 
    \begin{cases}
        0,\; \text{ if } \nu_i = 0,\\
        \infty,\; \text{ if } \nu_i > 0 \text{ and }\mu_i = 0,\\
        \nu_i \log\left( \frac{\nu_i}{\mu_i}\right),\; \text{otherwise}.
    \end{cases}
\end{equation*}

\subsection{Polynomial optimization}\label{subsec:PolyOpt}

We consider the constrained polynomial optimization problem
\begin{equation}\label{eq:POP}
\struc{f_K^\star} \coloneqq \inf_{\xb \in K} f(\xb),
\end{equation}
where $f \in \R[\xb]$ and the feasible set $K \subseteq \R^n$ is a basic closed semialgebraic set of the form
\[
\struc{K} \coloneqq \{\xb \in \R^n : g_j(\xb) \ge 0,\; j=1,\dots,m\},
\]
with $g_j \in \R[\xb]$.
A standard reformulation expresses \eqref{eq:POP} as a nonnegativity problem:
\begin{align}\label{eq:POP-nn-problem}
f_K^\star = \sup \big\{ t \in \R : f(\xb) - t \ge 0 \ \text{for all } \xb \in K \big\}.
\end{align}
Thus, solving \eqref{eq:POP} amounts to certifying nonnegativity of polynomials over $K$.
A classical approach to such certificates is provided by various \emph{Positivstellensätze} often invoking obvious nonnegative polynomials (see Section~\ref{subsec:NNcertificates}). 
These representations form the theoretical basis of several hierarchies of convex relaxations for polynomial optimization.

\smallskip
In particular, we recall the Positivstellensatz by  Krivine-Stengle~\cite{Krivine,Stengle}.
\begin{theorem}\label{thm:Krivine-Stengle}
Assume that $0\leq g_j(\xb)\leq 1$ on $K$ for every $j$ and the family $\{g_j(\xb),1-g_j(\xb)\}$ generates $\R[\xb]$. If $f(\xb)$ is strictly positive on $K$, then 
\[
f = \sum_{(\ab,\bb) \in \N^{2m}} \lam_{\ab\bb} \prod_{j=1}^m \left(g_j^{a_j} (1-g_j)^{b_j}\right),
\]
for nonnegative scalars $(\lam_{\ab\bb})$. 
\end{theorem}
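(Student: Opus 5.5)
The plan is to follow the classical route via Kadison–Dubois (the Archimedean representation theorem for preorderings/semirings), as in the proofs of Krivine and Handelman–Vasilescu. Let $S$ be the multiplicative semiring in $\R[\xb]$ generated by $\R_{\geq 0}$ and the polynomials $g_j, 1-g_j$, $j\in[m]$. Concretely, $S$ is the set of finite nonnegative combinations of the products $\prod_j g_j^{a_j}(1-g_j)^{b_j}$. The goal is exactly to show $f\in S$. By construction $S$ is closed under addition and multiplication and contains $\R_{\geq 0}$.

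The first step is to show that $S$ is Archimedean: for every $p\in\R[\xb]$ there is $N\in\N$ with $N\pm p\in S$. The set of such $p$ (the ring of bounded elements) is a subring. This uses the standard identities
\[
N_1N_2-pq=\tfrac12\bigl((N_1-p)(N_2+q)+(N_1+p)(N_2-q)\bigr)
\]
and closure under sums. It contains each $g_j$, since $1-g_j\in S$ and $1+g_j\in S$. It also contains $1-g_j$. Since $\{g_j,1-g_j\}$ generates $\R[\xb]$ as an algebra, every polynomial is bounded, so $S$ is Archimedean. This is where the generation hypothesis enters.

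The second step is to apply the Kadison–Dubois representation theorem (in Krivine's semiring version). For an Archimedean semiring $S$ containing $\R_{\ge0}$ (here we also have $\Q_{\ge 0}\subseteq S$), any $f$ with $\varphi(f)>0$ for every ring homomorphism $\varphi\colon\R[\xb]\to\R$ satisfying $\varphi(S)\subseteq\R_{\ge0}$ lies in $S$. Such homomorphisms are point evaluations at some $\xb\in\R^n$ with $g_j(\xb)\ge0$ and $1-g_j(\xb)\ge 0$. Those points belong to $K$, and $f>0$ on $K$ by assumption, so the conclusion follows.

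The main obstacle is the Kadison–Dubois step itself, if it cannot be quoted. In that case I would prove it by contradiction. Suppose $f\notin S$. Use Zorn's lemma to enlarge $S$ to a maximal proper semiring $T$ with $-f\in T$, or, more cleanly, one avoiding $f$ suitably, using $\varepsilon$-shifts. Then show that a maximal Archimedean semiring $T$ satisfies $T\cup -T=\R[\xb]$. Finally define $\varphi(p)=\inf\{r\in\Q: r-p\in T\}$ and verify that it is a ring homomorphism nonnegative on $S$ with $\varphi(f)\le 0$. This contradicts positivity of $f$ at the corresponding point of $K$. The delicate parts are multiplicativity of $\varphi$ and the strictness of positivity, which is needed to absorb an $\varepsilon$.
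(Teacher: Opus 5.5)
Your main argument is correct, and it is the classical route. The paper gives no proof of this statement; it only cites Krivine and Stengle. What you describe is exactly the standard derivation behind that citation: the generation hypothesis makes $S$ Archimedean, the Kadison--Dubois (Krivine) representation theorem applies, and the characters nonnegative on $S$ are point evaluations at points of $K$. For that last step, add one line: a ring homomorphism $\R[\xb]\to\R$ is nonnegative on $\R_{\geq 0}\subseteq S$, hence monotone on $\R$ and therefore the identity there, so it is an $\R$-algebra homomorphism.

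Your fallback sketch of Kadison--Dubois does not work as written.

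\emph{Where it fails.} You cannot start Zorn's lemma from a proper semiring containing $S\cup\{-f\}$. When $f>0$ on $K$, the semiring $S[-f]$ always contains $-1$: by the theorem, $f-\varepsilon\in S$ for small $\varepsilon>0$, so $-\varepsilon\in S[-f]$. Properness therefore cannot be extracted from $f\notin S$. Taking $T$ maximal among semirings that merely avoid $f$ does not rescue this, since the totality argument needs $-1$, not $f$, in both enlargements.

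\emph{What is missing.} The missing idea is a denominator-clearing step. It is cleanest with $S$-modules $M$ ($M+M\subseteq M$, $SM\subseteq M$, $1\in M$).

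\emph{Totality.} A maximal $M$ with $-1\notin M$ is total. Suppose $a\notin M\cup -M$. Then $-1=m_1+t_1a=m_2-t_2a$ with $m_i\in M$ and $t_i\in S$. Eliminating $t_1t_2a$ gives $-(t_1+t_2)=t_1m_2+t_2m_1\in M$. On the other hand, $t_1(N+a)$ and $t_2(N-a)$ lie in $S$, which gives $Nt_1-1\in M$ and $Nt_2-1\in M$. Adding, $-2\in M$, a contradiction.

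\emph{Character.} The functional $\varphi(a)=\sup\{r\in\mathbb{Q}: a-r\in M\}$ is a character nonnegative on $S$. For multiplicativity, note that if $\varphi(b)=0$ and $t\in S$, then $t(\varepsilon\pm b)\in M$ gives $\varphi(tb)=0$. Apply this to $b-\varphi(b)$ and use $\R[\xb]=S-S$.

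\emph{Clearing the denominator.} Positivity of $f$ therefore forces $-1\in S-Sf$, that is, $tf=1+s$ with $s,t\in S$. Choose $N$ with $N-t\in S$ and set $u=1-t/N\in S$. Then $1-u\in S$ and $f=\frac{1+s}{N}+uf$. Iterate this, and pick $N'$ with $f+N'\in S$, to get
\[
f=\frac{s}{N}\sum_{i<k}u^i+\frac{1}{N}\sum_{i<k}\bigl(u^i-u^k\bigr)+\frac{k-NN'}{N}\,u^k+u^k\,(f+N').
\]
Since $u^i-u^k=u^i(1-u)(1+u+\dots+u^{k-i-1})\in S$, every term lies in $S$ once $k\geq NN'$. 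This step, not an $\varepsilon$-shift, is where strict positivity and the Archimedean property do the real work.
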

Relaxing the nonnegativity condition in~\eqref{eq:POP-nn-problem} to a Krivine-Stengle certificate of positivity yields a linear program.

\subsection{Tractable nonnegativity certificates}\label{subsec:NNcertificates}
Since deciding nonnegativity of polynomials is NP-hard, one replaces it by membership in tractable cones that provide sufficient certificates.

\subsubsection{Sums of squares (SOS)}
A polynomial $f \in \R[\xb]_{n,2d}$ is a \struc{\emph{sum of squares (SOS)}} if there exist $g_1,\dots,g_k \in \mathbb{R}[\xb]_{n,d}$ such that $f = \sum_{i=1}^k g_i^2$.
The cone of SOS polynomials in $\R[\xb]_{n,2d}$ is denoted by $\struc{\Sigma_{n,2d}}$, and, its restriction to a support set $\cA$ by
\[
\struc{\Sigma_{n,2d}(\cA)} \coloneqq \{f \in \Sigma_{n,2d} : \supp(f) \subseteq \cA\}.
\]
Membership in $\Sigma_{n,2d}$ can be decided via \struc{\emph{semidefinite programming (SDP)}} through a Gram matrix representation, see e.g.~\cite{Laurent:survey}.

\smallskip

If the Gram matrix in this representation is \emph{diagonally dominant} or \emph{scaled diagonally dominant}, this leads to so-called \struc{\emph{DSOS}} and  \struc{\emph{SDSOS polynomials}}, respectively. In this case, searching for such a decomposition reduces to linear programming and second-order cone programming. For further details, we refer to~\cite{Ahmadi:Majumdar:SOS-SDSOS}.  

\medskip
In the SOS framework, we have for instance \struc{\emph{Lasserre's hierarchy}}~\cite{lasserre2001global}, which provides SOS/SDP relaxations for~\eqref{eq:POP} based on \emph{Putinar's Positivstellensatz}~\cite{putinar1993positive}. 

\subsubsection{Sums of nonnegative circuit polynomials (SONC)}\label{subsubsec:SONC}
An alternative certificate of nonnegativity especially suitable for sparsity are circuit polynomials. 

Recall, a finite set $A \subseteq \N^n$ is called a \struc{\emph{circuit}} if it is minimally affinely dependent, i.e., $A$ is affinely dependent but every proper subset is affinely independent, see e.g. \cite{GKZ:Discriminants}. We focus on the case where $A = S \cup \{\betab\}$, with $S$ affinely independent and $\betab$ contained in the relative interior of $\conv(S)$. 
Such a configuration is often referred to as a \emph{simplicial circuit}. 
For simplicity, we will refer to these sets simply as circuits and denote them by $(S,\beta)$.

Since $\conv(S)$ is a simplex, the point $\betab$ admits a unique representation as a convex combination of the elements of $S$, i.e., $\betab = \sum_{\alpb \in S} \gamma_{\alpb} \alpb$, with $\gamma_{\alpb} >0$ for all $\alpb \in S$ and $\sum_{\alpb \in S} \gamma_{\alpb} = 1$. 
The coefficients $\struc{(\gamma_{\alpha})_{\alpha \in S}}$ are called the \struc{\emph{barycentric coordinates}} of $\betab$ (with respect to $S$).

\medskip
A polynomial $f \in \R[\cA]$ is called a \struc{\emph{circuit polynomial}} if it is either a (sum of) monomial square(s) or it is supported on a circuit $(S,\betab)$ with $S \subseteq 2\N^n$. Therefore, it can be written as
\begin{equation}\label{eq:defcircuit}
f(\xb)=\sum_{\alpb \in S} c_{\alpb} \xb^{\alpb} + d \xb^{\betab},
\end{equation}
where $c_{\alpb} > 0$ for all $\alpb \in S$ and $d \in \R$.
For every circuit polynomial $f$, its associated \struc{\emph{circuit number}} is 
\begin{align*}
 \struc{\Theta_f}  \coloneqq \prod_{\alpb \in S} \left( \frac{c_{\alpb}}{\gamma_{\alpb}}\right)^{\gamma_{\alpb}}.
\end{align*}
A key property is that nonnegativity of $f$ can be characterized explicitly in terms of $\Theta_f$: 
the polynomial $f$ is nonnegative on $\R^n$ if and only if either it is a sum of monomial squares or $|d| \leq \Theta_f$, see~\cite[Theorem 1.1]{iliman2016amoebas}.

A polynomial is called a \struc{\emph{sum of nonnegative circuit polynomials (SONC)}} if it can be expressed as a conic combination of nonnegative circuit polynomials. We denote the corresponding cone of all SONC polynomials in $\R[\xb]_{n,2d}$ by $\struc{C_{n,2d}}$, and by $\struc{C_{n,2d}(\cA)}$ its restriction to polynomials supported on $\cA$. In contrast to SOS, SONC polynomials always admit a \emph{cancellation-free decomposition} into nonnegative circuits whose supports are contained in the Newton polytope (see, e.g., \cite[Section 5]{wang2022nonnegative} and \cite[Section 5]{murray2021newton}); we call this feature sparsity-preserving property.

\smallskip

Membership in $C_{n,2d}$ can be checked via \struc{\emph{relative entropy programming (REP)}}, see e.g.,~\cite{dressler2017positivstellensatz}. The SONC cone also allows for optimization approaches via  \emph{geometric programming} \cite{IlimanDeWolff_LowerBounds}, \emph{second order cone programming}  \cite{MagronWang_SecondOrderConeProgramming}, or \emph{duality theory} \cite{Papp_DualitySONC}.

\smallskip
Lastly, we point out that sums of binomial squares are SONC polynomials, and since SDSOS polynomials can be interpreted as sums of binomial squares (with respect to the monomial basis), they are also SONC.

\subsubsection{SOS+SONC polynomials}\label{subsubsec:Prelim_SOS+SONC}
As the SOS and SONC cones are in general independent of each other, a natural, recent approach is to consider their combination. 

Following~\cite{dressler2025study}, we say a polynomial $f\in \R[\xb]_{n,2d}$ is a \struc{\emph{sum of squares plus a sum of nonnegative circuit polynomials (SOS+SONC)}}, if it decomposes as $f=f_{\sos}+f_{\sonc}$ with $f_{\sos}\in \Sigma_{n,2d}$ and $f_{\sonc} \in C_{n,2d}$. The set of all SOS+SONC polynomials is denoted by $\struc{(\Sigma+C)_{n,2d}}$. 

\smallskip
Note that, $(\Sigma+C)_{n,2d}=\Sigma_{n,2d} +C_{n,2d}=\conv\left(\Sigma_{n,2d} \cup C_{n,2d}\right)$. 
With $\struc{P_{n,2d}}$ being the cone of nonnegative $n$-variate polynomials of degree at most $2d$, we further have 
$(\Sigma_{n,2d}  \cup C_{n,2d} ) \subseteq (\Sigma+C)_{n,2d}  \subseteq P_{n,2d}$, 
where the inclusions are strict in all non-Hilbert cases (\cite{dressler2025study}).

\smallskip
Deciding membership in $(\Sigma+C)_{n,2d}$ amounts to an SDP-REP feasibility problem. 
An \struc{\emph{SDP-REP (problem)}} is a conic problem over the Cartesian product of the semidefinite and the relative entropy cones~\cite{Schick2025-01-31squar-72682}. We can write such problems as
\begin{align}\label{eq:SDP-REP-def}
\min_{Q\in \Sc^{n};\, \nub,\mub\in (\R^m)^k;\,\cb\in \R^k} \quad & f(Q,\nub,\mub,\cb) \notag \\
\text{s.t.} \quad 
&(1)\;\; Q \succeq 0, \notag \\
&(2)\;\; D(\nub^{(i)},\mub^{(i)})\leq \cb^{(i)}, \quad (i \in [m]) \notag \\
&(3)\;\; h_j(Q,\nub,\mub,\cb) =  0, \quad (j \in [\ell]) \notag 
\end{align}
with $n,m,k,l\in \N$, and $f$ and $h_j$ being linear and affine linear functions, respectively. 
Constraint~(1) encodes the SDP constraint and (2) the REP one. 

\smallskip
The situation for the support restricted SOS+SONC cone is slightly more elaborate. Let $\struc{(\Sigma+C)_{n,2d} (\cA)} \coloneqq \{f\in (\Sigma+C)_{n,2d}: \supp(f) \subseteq \cA\}$. Then, 
\begin{equation}\label{eq:SOS+SONC_A-restricted_decomp}
f\in (\Sigma+C)_{n,2d} (\cA)\; \Leftrightarrow\; f_{\sos}\in \Sigma_{n,2d}(\cA(f)), \,f_{\sonc}\in C_{n,2d}(\cA(f)) \text{ s.t. } f=f_{\sos}+f_{\sonc}. 
\end{equation}
Since $\cA(f)\subseteq \conv(\cA)\cap \N^n$, we have $|\cA(f)|\geq |\supp(f)|$ and, in general, $|\cA(f)|\geq |\cA|$. Thus, we usually need to enlarge the supports of $f_{\sos}$ and $f_{\sonc}$ in comparison to $\supp(f)$, when searching for a decomposition~\eqref{eq:SOS+SONC_A-restricted_decomp}. A detailed discussion on this can be found in Sections 5.4.1 and 5.4.2 of~\cite{Schick2025-01-31squar-72682}. For an explicit SDP-REP formulation for membership in $(\Sigma+C)_{n,2d} (\cA)$, we refer to~\cite[Corollary 5.4.9]{Schick2025-01-31squar-72682}.

\subsection{The bounded-SOS relaxation}\label{subsec: b-sos}

Now we recall the bounded degree SOS (B-SOS) relaxation, originally proposed in \cite{lasserre2013lagrangian} and further developed in \cite{lasserre2017bounded}. 
Throughout the paper, we impose the following assumption, which is standard in the analysis of bounded degree relaxations.

\textbf{Assumption A: } 
The set $K$ is nonempty and compact, and the family of polynomials $\{g_1,\ldots,g_m,1-g_1,\ldots,1-g_m\}$ generates the algebra $\R[\xb]$.

Since $K$ is compact, each $g_j$ is bounded on $K$, and by rescaling if necessary, we may assume without loss of generality that
$0 \leq g_j(\xb) \leq 1$ on $K$ for all $j = 1,\ldots, m$.

Moreover, Assumption A is not restrictive: if $K$ is compact, it can always be enforced by a suitable reformulation.
Indeed, after rescaling the variables, one may assume $K\subseteq [0,1]^n$, and by adding redundant constraints $0\leq x_i\leq 1, i=1,\ldots,n$, the above family includes $x_i$ and $1-x_i$, and therefore generates $\R[\xb]$.

\smallskip
For fixed $d \in \N$ and nonnegative \struc{\emph{Lagrange multipliers}} $\struc{\lamb = (\lam_{\ab \bb})}$, we introduce the  \struc{\emph{Lagrange polynomial}}:
\begin{align}\label{eq:Lagrangian-function}
    \xb \mapsto \struc{L_d(\xb, \lamb)} \coloneqq f(\xb) - \sum_{(\ab,\bb) \in \N_d^{2m}} \lam_{\ab\bb} \prod_{j=1}^m g_j^{a_j}(\xb) (1-g_j(\xb))^{b_j}.
\end{align}
Recall, $(\ab,\bb) \in \N_d^{2m}=\{\ab \in \N^m, \bb \in \N^m : |\ab| + |\bb| \leq d\}$.

We now consider a family of optimization problems indexed by $d\in \N$. For this, we fix $k\in \N$ and define the \struc{bounded degree SOS (B-SOS) relaxation}:
\begin{align}
    \label{eq:B-SOS-hierarchy}
    \struc{q_d^k} \coloneqq \sup_{t} \{t\in \R\quantify L_d(\xb,\lamb) - t  \in \Sigma_{n,2k},\, \lamb \geq 0\}.
\end{align}
Clearly, computing each $q_d^k$ reduces to solving an SDP, and in fact~\eqref{eq:B-SOS-hierarchy} leads to a complete hierarchy of SDP relaxations for the constrained optimization problem~\eqref{eq:POP}. The computational feature of this approach comes from the fact, that the associated matrices to check the semidefinite constraint in~\eqref{eq:B-SOS-hierarchy} are of fixed size $\binom{n+k}{k}$, so independent of the relaxation order $d\in \N$.

\smallskip
In the case, that Slater's condition holds, and the polynomials $f$ and $-g_j$, $j \in [m]$, are \emph{SOS-convex} of degree at most $2k$, then we have $q_1^k = f_K^{\star}$, i.e., finite convergence takes place at the first relaxation in the hierarchy (first-level exactness). 

Remember, that a polynomial is said to be \struc{\emph{SOS-convex}} if its Hessian $H(\xb)$ is an \emph{SOS-matrix}, i.e., it factors as $M(\xb)M(\xb)^\top$ for some rectangular matrix polynomial $M$. This is equivalent to $\yb^\top H(\xb)\yb$ being an SOS in $\R[\xb;\yb]$.


\section{The bounded SOS+SONC hierarchy and finite convergence}\label{Sec: The bounded-SOS+SONC hierarchy and finite convergence}

In this section, we introduce a new hierarchy for constrained polynomial optimization by combining the bounded degree SOS framework with the SOS+SONC cone. The resulting hierarchy preserves the tractability of bounded SOS relaxations while strictly enlarging the certificate space through the additional expressive power of SONC polynomials. 
This allows us to capture nonnegativity structures that are inaccessible to SOS certificates alone, without sacrificing convergence guarantees.

\subsection{The bounded SOS+SONC hierarchy}
Let Assumption A hold.

In \cite{dressler2022hypercube} it is shown that, on the Boolean hypercube $\{0,1\}^n$, nonnegativity certificates based on the SONC cone can be used in place of SOS certificates. In fact, in this setting the SONC cone is contained in the SOS cone; see \cite{Kurpisz:deWolff}.
In the present framework, however, the constraints $0 \leq g_j \leq 1$ for $j=1,\ldots,m$ do not imply that the feasible region $K$ has a hypercube structure. Rather, $K$ is a general compact semialgebraic set, and the cones of SOS and SONC polynomials are in general incomparable.

Consequently, neither SOS nor SONC certificates alone fully capture the strengths of the other. This motivates the use of the combined SOS+SONC cone, which allows one to exploit their complementary features in constructing bounded degree relaxations.
Our construction can also be viewed as a natural extension of the bounded-degree hierarchy based on SDSOS and SOCP relaxations~\cite{chuong2019new}, in the sense that it keeps the bounded-degree philosophy while enlarging the certificate cone from SDSOS to SOS+SONC.
\medskip

Let $\cA \subseteq \N^n$ be a finite support set. For fixed $k \in \N$ and parameter $d \in \N$, we define the \struc{bounded degree SOS+SONC  (B-SOS+SONC) relaxation } by
\begin{equation}\label{eq: B-SOS+SONC-hierarchy}
\struc{p_{d,\cA}^k} \coloneqq \sup \left\{ t \in \R \quantify L_d(\xb,\lamb) - t \in (\Sigma + C)_{n,2k}(\cA),\; \lamb \geq 0 \right\},
\end{equation}
where $L_d(\xb,\lamb)$ is as in~\eqref{eq:Lagrangian-function}. 
We emphasize that $k$ is a fixed constraint size, while $d$ indexes the hierarchy level, so, the relaxation order. 
Observe that if the ambient support is $\cA=\N^{n}_{2k}$, so the support restriction is vacuous, the relaxation can be regarded in the support unrestricted SOS+SONC cone $(\Sigma + C)_{n,2k}$. In this case we simply write $\struc{p_d^k}\coloneqq p_{d,\N^{n}_{2k}}^k$ for the full-support hierarchy.

Every feasible value $t$ in~\eqref{eq: B-SOS+SONC-hierarchy} is a valid lower bound for
\eqref{eq:POP}. Indeed, if $(t,\lamb)$ is feasible, then for every
$\xb\in K$,
\begin{align*}
f(\xb)-t
&=
L_d(\xb,\lamb)-t
+
\sum_{(\ab,\bb)\in\N_d^{2m}}
\lambda_{\ab\bb}
\prod_{j=1}^m
g_j(\xb)^{a_j}(1-g_j(\xb))^{b_j}\geq0,
\end{align*}
because $L_d(\xb,\lamb)-t$ is globally nonnegative and
$0\leq g_j(\xb)\leq1$ on $K$. Consequently,
\[
p_{d,\cA}^k\leq f_K^\star.
\]

The constraint $L_d(\xb,\lamb)-t \in (\Sigma+C)_{n,2k}(\mathcal{A})$ requires that the polynomial $\struc{\cL}\coloneqq L_d(\xb,\lamb)-t$ admits a decomposition into a sum of a truncated SOS polynomial and a truncated SONC polynomial supported on $\mathcal{A}(\cL)$, see Section~\ref{subsubsec:Prelim_SOS+SONC}, i.e.,
\[
\cL=L_d(\xb,\lamb)-t = f_{\sos} + f_{\sonc},
\quad f_{\sos} \in \Sigma_{n,2k}(\mathcal{A(\cL)}), \quad f_{\sonc} \in C_{n,2k}(\mathcal{A(\cL)}).
\]

\smallskip

For completeness, we note that in order for the above representation to be meaningful, $k$ is typically chosen sufficiently large so that the support of $L_d(\xb,\lamb)$ is contained in $\mathcal{A}$. In particular, this is ensured whenever $k \ge \frac{1}{2}\max\{\deg(f),\, d \max_j \deg(g_j)\}$. 
We will return to this choice in the computational formulation in Section~\ref{sec:numerics}.

\begin{remark}[The case $k=0$]
For $k=0$, the B-SOS+SONC relaxation reduces to a linear program. In this case, the $d$-th relaxation of the hierarchy coincides with the classical LP relaxation based on the Krivine--Stengle Positivstellensatz, see Section~\ref{subsec:PolyOpt}.
\end{remark}

Next, we show that the B-SOS+SONC relaxation yields a complete hierarchy indexed by $(d,k)$, where $d$ controls the Lagrangian lifting and $k$ the a priori fixed polynomial truncation degree.

\begin{theorem}[Complete Hierarchy]
\label{thm:hierarchyB-SOS+SONC}
For every fixed $k \in \mathbb{N}$, the sequence $\{p_{d,\cA}^k\}_{d \in \mathbb{N}}$ is monotonically increasing and satisfies
\[
p_{d,\cA}^k \uparrow f_K^\star \quad \text{as } d \to \infty,
\]
where $f_K^\star$ denotes the optimal value of the underlying polynomial optimization problem over $K$.

In particular, the bounded SOS+SONC hierarchy is complete.
\end{theorem}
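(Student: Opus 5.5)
The proof splits into three parts: the upper bound, monotonicity, and convergence. Throughout, $k$ and $\cA$ are fixed, and we assume (as discussed after \eqref{eq: B-SOS+SONC-hierarchy}) that $\cA$ is large enough to contain the supports of the Lagrangians that occur, and contains $\mathbf{0}$.

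\emph{Upper bound.} It was shown before the statement that $p_{d,\cA}^k\le f_K^\star$ for every $d$, so the sequence is bounded above.

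\emph{Monotonicity.} Take $(t,\lamb)$ feasible at level $d$. Extend $\lamb$ by zeros to all $(\ab,\bb)\in\N^{2m}_{d+1}\setminus\N^{2m}_d$. The Lagrangian does not change, $L_{d+1}(\xb,\lamb)=L_d(\xb,\lamb)$, so $(t,\lamb)$ is feasible at level $d+1$. Hence $p_{d,\cA}^k\le p_{d+1,\cA}^k$, and the sequence converges to a limit that is at most $f_K^\star$.

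\emph{Convergence.} It remains to show that for every $\varepsilon>0$ there is a $d$ with $p_{d,\cA}^k\ge f_K^\star-\varepsilon$. Set $t=f_K^\star-\varepsilon$. Then $f-t\ge\varepsilon>0$ on $K$. By Assumption A and the normalization $0\le g_j\le 1$ on $K$, the Krivine--Stengle Positivstellensatz (Theorem~\ref{thm:Krivine-Stengle}) applies. It gives a finite family of nonnegative scalars $\lam_{\ab\bb}$ with
\[
f-t=\sum_{(\ab,\bb)}\lam_{\ab\bb}\prod_{j=1}^m g_j^{a_j}(1-g_j)^{b_j}.
\]
Let $d$ be the maximal $|\ab|+|\bb|$ that appears, and let $\lamb$ be this family, padded with zeros. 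Then $L_d(\xb,\lamb)-t=0$ identically. The zero polynomial lies in $\Sigma_{n,2k}(\cA)\subseteq(\Sigma+C)_{n,2k}(\cA)$ for every $k$ and every support set, so $(t,\lamb)$ is feasible. Therefore $p_{d,\cA}^k\ge f_K^\star-\varepsilon$. Combined with monotonicity and the upper bound, this gives $p_{d,\cA}^k\uparrow f_K^\star$.

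\emph{Main obstacle.} The delicate point is the bookkeeping in the convergence step. One must check that the constant $t$ is absorbed into the Krivine--Stengle products, which holds because the empty product $(\ab,\bb)=(0,0)$ equals $1$. One must also check that a representation with zero remainder is admissible for arbitrary fixed $k$, including $k=0$, and for arbitrary $\cA$, which holds because the zero polynomial is in every such cone. The conceptual content is therefore entirely supplied by Theorem~\ref{thm:Krivine-Stengle}. The SOS+SONC component only improves the bounds and is not needed for convergence. In particular $p_{d,\cA}^k\ge q_d^k$ whenever the B-SOS relaxation is feasible with the same support.
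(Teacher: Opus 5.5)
Your proof is correct and follows the paper's argument: you pad the multipliers with zeros to get monotonicity, and you apply Krivine--Stengle to $f-(f_K^\star-\varepsilon)$ to get $L_{d}(\xb,\lamb)-(f_K^\star-\varepsilon)=0$, which is feasible because the zero polynomial lies in $(\Sigma+C)_{n,2k}(\cA)$. Your opening assumption that $\cA$ contains the Lagrangian supports is never used and can be dropped, as your final paragraph already observes.
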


\begin{proof}
Monotonicity follows by extending every multiplier vector at level $d$ by zeros at level $d+1$.

\smallskip

It remains to show that the hierarchy is complete. 
Let $\eps>0$. Since
\[
f(\xb)-\bigl(f_K^\star-\eps\bigr)>0
\qquad\text{on }K,
\]
Theorem~\ref{thm:Krivine-Stengle} yields an integer
$d_\eps$ and nonnegative coefficients $\lambda_{\ab\bb}$ such that
\[
f(\xb)-\bigl(f_K^\star-\eps\bigr)
=
\sum_{(\ab,\bb)\in\N_{d_\eps}^{2m}}
\lambda_{\ab\bb}
\prod_{j=1}^m
g_j(\xb)^{a_j}(1-g_j(\xb))^{b_j}.
\]
Equivalently,
\[
L_{d_\eps}(\xb,\lamb)
-\bigl(f_K^\star-\eps\bigr)=0.
\]
Since the zero polynomial belongs to $(\Sigma+C)_{n,2k}(\cA)$, the value $f_K^\star-\eps$ is feasible for
$p_{d_\eps,\cA}^k$. By monotonicity,
\[
f_K^\star-\eps
\leq p_{d,\cA}^k
\leq f_K^\star
\qquad
\text{for all }d\geq d_\eps.
\]
Since $\eps>0$ was arbitrary,
$p_{d,\cA}^k\uparrow f_K^\star$.
\end{proof}

Theorem~\ref{thm:hierarchyB-SOS+SONC} shows that replacing the SOS cone by the larger SOS+SONC cone preserves the convergence guarantees of the B-SOS hierarchy.

Moreover, for the full-support choice $\cA=\N_{2k}^n$, the inclusion
$\Sigma_{n,2k}\subseteq(\Sigma+C)_{n,2k}$ gives
\begin{align}\label{ineq:comparisonof f^*, p_d^k and q_d^k}
q_d^k\leq p_d^k\leq f_K^\star.
\end{align}
Hence, the resulting bounds are at least as strong as those obtained from B-SOS relaxations.

\subsection{SDP-REP formulation} \label{subsec:SDP-REP formulation}

As before, let $\cA \subseteq \N^n$ be finite.
Recalling~\eqref{eq: B-SOS+SONC-hierarchy}, the bounded SOS+SONC relaxation can be written as
\begin{equation}
\label{eq:SDP-REP-start}
\begin{aligned}
\sup \quad & t \\
\text{s.t.} \quad 
& L_d(\xb,\lamb) - t = f_{\sos} + f_{\sonc}, \\
& f_{\sos} \in \Sigma_{n,2k}(\cA(\cL)), \quad f_{\sonc} \in C_{n,2k}(\cA(\cL)), \\
& \lamb \geq 0, \; t \in \R,
\end{aligned}
\end{equation}
where
\[
L_d(\xb,\lamb) = f(\xb) - \sum_{(\ab,\bb) \in \N_d^{2m}} \lambda_{\ab\bb} \prod_{j=1}^m g_j^{a_j}(1-g_j)^{b_j}.
\]

To derive a tractable formulation, we expand the polynomial $\cL=L_d(\xb,\lamb) - t$ as
\begin{equation}
\label{eq:lagrange-expansion}
L_d(\xb,\lamb) - t \eqqcolon \sum_{\etab \in \N_s^n} \struc{L_{\etab}(\lamb,t)}\,\xb^{\etab},
\end{equation}
where each coefficient $L_{\etab}(\lamb,t)$ depends affinely on $(\lamb,t)$, and the deterministic degree bound for each fixed $d$, $\struc{s} = \max\left\{\deg(f), \max_{(\ab,\bb) \in \N_d^{2m}} \deg\left(\prod_{j} g_j^{a_j}(1-g_j)^{b_j}\right)\right\}$.

\medskip
For $f\in \R[\cA]$, recall $\cA(f) = \New(f) \cap \N^n=\{\alpb(1),\dots,\alpb(|\cA(f)|)\}$ and $\mathcal{H}(f) = \frac{1}{2}\cA(f) \cap \N^n$. 
For a fixed index $j \in [m]$, we denote by $\struc{\fb_{\setminus j}} \coloneqq (f_{\alpb(i)})_{i \in [m]\setminus j}$ the vector of all coefficients except for the $j$-th.
Let $\struc{I} \coloneqq \{i \in [|\cA(f)|] \quantify \alpb(i) \notin 2\N^n\}$ be the index set of odd exponents in the support. 

Using the coefficient matching conditions together with the SOS and SONC representations, the B-SOS+SONC relaxation can be equivalently written as the following SDP-REP, for $2k \leq s$.

\begin{align}\label{eq:SDP-REP-final}
p_d^k = \sup_{\substack{t; \lamb;Q\in \Sc^{\abs{\cH(\cL)}}, \\ \big(\cb^{(l)}\big)_{l \in [|\cA(\cL)|]},\big(\nub^{(l)}\big)_{l \in [|\cA(\cL)|]} \R^{2\abs{\cA(\cL)}}}} \quad & t \notag \\
\text{s.t.} \quad 
&(1)\;\; L_{\etab}(\lamb,t) =
\sum_{\betab+\gamb=\etab} Q_{\betab,\gamb} + \sum_{l=1}^{|\cA(\cL)|} c^{(l)}_{\etab}
&& \forall \etab \in \N_{2k}^n, \notag \\
&(2)\;\; L_{\etab}(\lamb,t) = 0
&& \forall \etab \in \N_s^n,\; |\etab| > 2k, \notag \\
&(3)\;\; Q \succeq 0, \notag \\
&(4)\;\; \lamb \geq 0, \notag \\
&(5)\;\; \text{for each } l=1,\dots,|\cA(\cL)|,\; (\cb^{(l)},\nub^{(l)}) \text{ satisfy} \notag \\
&\quad (a)\;\; \cb_{\setminus l}^{(l)},\, \nub_{\setminus l}^{(l)} \geq 0, \notag \\
&\quad (b)\;\; \cb_{j}^{(l)} = \nub_{j}^{(l)} = 0 \quad (j \in I \setminus \{l\}), \notag \\
&\quad (c)\;\; D(\nub_{\setminus l}^{(l)}, e\,\cb_{\setminus l}^{(l)}) \leq \cb_l^{(l)}, \notag \\
&\quad (d)\;\; D(\nub_{\setminus l}^{(l)}, e\,\cb_{\setminus l}^{(l)}) \leq -\cb_l^{(l)} \quad (l \in I), \notag \\
&\quad (e)\;\; \sum_{i=1}^{|\cA(\cL)|} \alpb(i)\,\nub_i^{(l)} = 0, \notag \\
&\quad (f)\;\; \sum_{i=1}^{|\cA(\cL)|} \nub_i^{(l)} = 0. \notag
\end{align}

\medskip

The constraints can be interpreted as follows:
\begin{itemize}
    \item \textbf{(1)-(2)} enforce coefficient matching between the Lagrangian and the SOS+SONC decomposition;
    \item \textbf{(3)} encodes the SOS structure via a positive semidefinite Gram matrix;
    \item \textbf{(5)} imposes the SONC conditions through relative entropy constraints;
    \item \textbf{(4)} ensures nonnegativity of the Lagrange multipliers.
\end{itemize}

\smallskip

Hence, the B-SOS+SONC relaxation can be solved as a tractable convex optimization problem combining semidefinite and relative entropy programming; implementation details are deferred to Section~\ref{sec:numerics}.

\subsection{First-level exactness}
In this section, we identify a sufficient condition under which the B-SOS+SONC hierarchy achieves first-level exactness.

\begin{corollary}\label{cor:SOS-convex-first-level}
Assume that Slater's condition holds. If $f$ and $-g_j$, $j = 1,\ldots,m$, are SOS-convex polynomials of degree at most $2k$, then the B-SOS+SONC hierarchy is exact at the first level, that is,
\[
p_1^k = f_K^{\star}.
\]
\end{corollary}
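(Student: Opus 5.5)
The plan is to reduce the statement to the corresponding B-SOS result recalled at the end of Section~\ref{subsec: b-sos}, namely that under Slater's condition and SOS-convexity of $f$ and $-g_j$ (degree at most $2k$) one has $q_1^k = f_K^\star$. This result is due to Lasserre~\cite{lasserre2013lagrangian}. Combined with the sandwich inequality~\eqref{ineq:comparisonof f^*, p_d^k and q_d^k} at level $d=1$, namely $q_1^k \le p_1^k \le f_K^\star$, it forces $p_1^k = f_K^\star$. So the proof is a two-line squeeze, and the only real content is checking that both ingredients apply in the present setting.

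The first step is to verify the inclusion used in~\eqref{ineq:comparisonof f^*, p_d^k and q_d^k} for $d=1$. Any $(t,\lamb)$ feasible for $q_1^k$ satisfies $L_1(\xb,\lamb)-t\in\Sigma_{n,2k}\subseteq(\Sigma+C)_{n,2k}$, which gives $q_1^k\le p_1^k$. The upper bound $p_1^k\le f_K^\star$ was shown right after~\eqref{eq: B-SOS+SONC-hierarchy}. Here we work with the full-support version $\cA=\N^n_{2k}$, which is the meaning of $p_1^k$.

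The second step is to recall, or sketch, why $q_1^k=f_K^\star$. By convexity of $f$ and $-g_j$ and Slater's condition, the convex problem has a KKT point $\xb^\star$ with multipliers $\mu_j\ge0$. Set $\lambda_{e_j,0}=\mu_j$ and all other multipliers to zero. Then $L_1(\xb,\lamb)-f_K^\star = f-f_K^\star-\sum_j\mu_j g_j$ is an SOS-convex polynomial of degree at most $2k$ whose gradient vanishes at $\xb^\star$ and whose value there is zero, by complementary slackness. A known lemma of Helton--Nie/Lasserre then shows it is SOS. The argument uses Taylor's formula with integral remainder, $p(\xb)=(\xb-\xb^\star)^\top\!\int_0^1\!\int_0^t H(\xb^\star+s(\xb-\xb^\star))\,ds\,dt\,(\xb-\xb^\star)$, where the Hessian is an SOS matrix. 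Hence $f_K^\star$ is feasible for $q_1^k$.

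The main obstacle is this SOS-convex-with-a-zero-implies-SOS step, together with the attainment of the minimum needed for KKT. Compactness from Assumption A guarantees attainment. For the SOS step I would simply cite Lasserre's theorem rather than reprove it, since the corollary is stated as an inherited consequence.
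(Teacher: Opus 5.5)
Your proposal is correct and is essentially the paper's own proof: the paper likewise squeezes $f_K^\star \geq p_1^k \geq q_1^k = f_K^\star$, using the inclusion $\Sigma_{n,2k}\subseteq(\Sigma+C)_{n,2k}$ at level $d=1$ and citing the first-level exactness of B-SOS under Slater's condition and SOS-convexity. Your extra sketch of why $q_1^k=f_K^\star$ is sound but not needed, since the corollary only inherits that result. That sketch places the KKT multipliers on $\lambda_{e_j,0}$ and uses the Helton--Nie lemma that an SOS-convex polynomial vanishing together with its gradient at a point is SOS.
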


\begin{proof}
Fixing $d = 1$ in~\eqref{ineq:comparisonof f^*, p_d^k and q_d^k} leads to $f_K^{\star} \geq p_1^k \geq q_1^k$. Under SOS-convexity of $f$ and $-g_j$ together with Slater's condition, the B-SOS relaxation is exact at level one, i.e., $q_1^k = f_K^{\star}$, see Section~\ref{subsec: b-sos}.
Consequently,
\[
f_K^{\star} \geq p_1^k \geq q_1^k = f_K^{\star},
\]
which implies $p_1^k = f_K^{\star}$.
\end{proof}

Corollary~\ref{cor:SOS-convex-first-level} shows that first-level exactness in the B-SOS+SONC hierarchy is inherited from the corresponding SOS-convexity mechanism of the B-SOS relaxation. In this regime, the SONC component is redundant for exactness, and the result is entirely driven by the SOS-convexity structure.

While this result follows from existing SOS-based arguments, it does not exploit the additional structure of the SONC cone. This motivates the introduction of new convexity notions tailored to the SOS+SONC framework.

In particular, this leads to the study of \emph{SONC-convexity conditions}, developed in Section~\ref{Sec: SONC-convexity}, as an alternative sufficient condition for first-level exactness.

A support-restricted extension of Corollary~\ref{cor:SOS-convex-first-level}, as well as of the SONC-based exactness results developed below, is presented in Remark~\ref{rem:support-restricted-exactness}.


\section{SONC-convexity} \label{Sec: SONC-convexity}

In this section, we introduce two SONC-based notions of convexity and show how they yield a new first-level exactness result for the B-SOS+SONC hierarchy. The key idea is to encode convexity directly within the SONC cone, thereby obtaining certificates that go beyond the SOS framework.

To start with, we derive structural properties of the SONC cone that will be used in the analysis of SONC-convexity; although the SONC cone is not invariant under general affine maps, we show it is indeed invariant under very structured monomial maps that preserve the combinatorial structure of supports, namely scaling, variable identification, and constant restriction. These algebraic, combinatorial stability properties are complemented by a limit, analytic stability property.

\subsection{Structural properties of the SONC cone} \label{subsec:further study of the SONC cone}

A key difference compared to the SOS cone is that the SONC cone is not invariant under general affine transformations of variables, see \cite{dressler2019approach}.
Nevertheless, it admits several favorable structured closure properties that are sufficient for our purposes.

\smallskip
We first collect several basic invariance properties of the SONC cone under structured variable transformations.

\begin{lemma}[Algebraic closure properties]
\label{lm:SONC_algebraic_closure}
The SONC cone $C_{n,2d}$ is preserved under the following operations:

\begin{enumerate}
\item[(i)] \textbf{Diagonal scaling:}  
Let $\ab \in \R^n$ with $a_i \neq 0$ for all $i$, and define the linear isomorphism
\[
T_{\ab} f(\xb) \coloneqq f(a_1 x_1,\ldots,a_n x_n).
\]
Then $T_{\ab}$ leaves $C_{n,2d}$ invariant.
\item[(ii)] \textbf{Nonnegative linear variable identification/projection:}  
The substitution $x_{p_i} = k_i x_{q_i}$ for $k_i \ge 0$ preserves SONC representations.

\item[(iii)] \textbf{Constant restriction:}  
Restricting $x_i = r_i$ for $r_i \in \mathbb{R}$ yields a SONC polynomial.
\end{enumerate}
\end{lemma}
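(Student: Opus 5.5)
Since every operation in the statement is linear in $f$, it suffices to check each one on a single nonnegative circuit polynomial $f=\sum_{\alpb\in S}c_{\alpb}\xb^{\alpb}+d\xb^{\betab}$ with $S\subseteq 2\N^n$ and $|d|\le\Theta_f$, or on a single monomial square. Conic combinations then carry over. For each map I will show that the image is again a sum of monomial squares plus at most one nonnegative circuit polynomial, or a finite SONC sum. The argument uses the characterization $f\ge0 \Leftrightarrow |d|\le\Theta_f$ from \cite[Theorem 1.1]{iliman2016amoebas}. Degrees never increase under these maps, so the image stays in $C_{n,2d}$.

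\textbf{(i) Diagonal scaling.} The map $T_{\ab}$ sends $\xb^{\alpb}$ to $\ab^{\alpb}\xb^{\alpb}$, so the support, and hence the circuit $(S,\betab)$ and the barycentric coordinates $\gamma_{\alpb}$, are unchanged. For $\alpb\in S$ we have $\ab^{\alpb}>0$ because $\alpb$ is even, so the new coefficients are $c'_{\alpb}=c_{\alpb}\ab^{\alpb}>0$ and $d'=d\,\ab^{\betab}$. Since $\betab=\sum\gamma_{\alpb}\alpb$,
\[
\Theta_{T_{\ab}f}=\Theta_f\prod_{\alpb}(\ab^{\alpb})^{\gamma_{\alpb}}=\Theta_f\,|\ab|^{\betab},
\]
where $|\ab|^{\betab}=\prod_i|a_i|^{\beta_i}$. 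Therefore $|d'|=|d|\,|\ab|^{\betab}\le\Theta_{T_{\ab}f}$. Monomial squares stay monomial squares.

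\textbf{(ii) and (iii).} Both substitutions act on exponents by an affine-linear map $\pi\colon\N^n\to\N^{n'}$. For identification, $\pi$ adds the $p_i$-th coordinate to the $q_i$-th. For restriction, $\pi$ deletes the $i$-th coordinate. Each monomial is also multiplied by a scalar, namely $k_i^{\alpha_{p_i}}\ge0$ or $r_i^{\alpha_i}$.

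Under $\pi$, even exponents map to even exponents. The scalars on the $S$-terms are nonnegative: $r_i^{\alpha_i}\ge0$ because $\alpha_i$ is even, and $k_i^{\alpha_{p_i}}\ge0$. By linearity, $\pi(\betab)=\sum\gamma_{\alpb}\pi(\alpb)$, so $\pi(\betab)$ lies in $\conv(\pi(S))$.

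The main obstacle is that $\pi(S)$ may no longer be affinely independent, some $\pi(\alpb)$ may coincide, coefficients may vanish, or $\pi(\betab)$ may land on the boundary or on a vertex. So the image is generally not a circuit polynomial.

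To handle this I will avoid tracking the combinatorics and argue with the AM-GM form of nonnegativity. A nonnegative circuit polynomial satisfies
\[
|d|\,|\xb^{\betab}|\le\Theta_f\prod_{\alpb}\bigl(\gamma_{\alpb}^{-1}c_{\alpb}\xb^{\alpb}\cdot\gamma_{\alpb}/c_{\alpb}\bigr)^{\gamma_{\alpb}}\cdot\ldots,
\]
or, equivalently, its image $g$ is an \emph{agiform}-type polynomial: nonnegative terms on even points $\pi(S)$ and one possibly negative term on $\pi(\betab)\in\conv(\pi(S))$. It is nonnegative because nonnegativity is preserved by substitution. I then invoke the known fact that such polynomials, with one non-square term, are SONC. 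Nonnegative polynomials whose support has all even points except one lying in the convex hull of the even points are SONC; see \cite{iliman2016amoebas} and \cite[Section 5]{wang2022nonnegative}. This is shown by writing the point as a convex combination over simplicial sub-circuits and splitting the coefficients as in the standard triangulation argument.

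Degenerate cases are handled directly. If $\pi(\betab)$ is even and the term is nonnegative, or if its coefficient vanishes (for example when $k_i=0$ or $r_i=0$), the image is a sum of monomial squares. If $\pi(\betab)$ is a vertex, nonnegativity forces its coefficient to be nonnegative and the exponent to be even, which again gives a sum of monomial squares.
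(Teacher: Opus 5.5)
Your proposal is correct. Part (i) coincides with the paper's argument. For (ii) and (iii) you take a genuinely different route from the paper.

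\textbf{What the paper does.} It asserts that the monomial substitution ``preserves the circuit structure''. From this it concludes that the image of a nonnegative circuit polynomial is again a nonnegative circuit polynomial, hence SONC.

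\textbf{What you do instead.} You only claim that the image is a globally nonnegative polynomial in which every term except possibly one is a monomial square. This holds because images of outer terms have even exponents and nonnegative coefficients, and collisions among them only add nonnegative coefficients. A collision with the image of $\beta$ simply produces the single distinguished term. You then import the theorem that such polynomials are SONC \cite{wang2022nonnegative,murray2021newton}.

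\textbf{Why your caution is warranted.} Restrict the Choi--Lam circuit polynomial at $z=1$. This gives $x^2y^2+x^2+y^2+1-4xy$, whose support is the vertex set of a square together with its centre. So it is not a circuit polynomial, although it is SONC as $(xy-1)^2+(x-y)^2$.

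\textbf{What each approach buys.} The paper's justification is accurate only when the support does not collapse. Your argument handles collisions, loss of affine independence and boundary positions uniformly. The price is reliance on a nontrivial decomposition theorem (relative-entropy/AGE duality, or the triangulation splitting), rather than on the circuit-number criterion of \cite{iliman2016amoebas} alone.

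\textbf{Two cleanups.} First, delete the garbled AM--GM display; it plays no role in the argument. Second, state the imported fact with its sign hypothesis: monomial squares plus one arbitrary term, globally nonnegative, implies SONC. Stated this way, the theorem needs no convex-hull or non-vertex assumption, since nonnegativity of the image already takes care of that. Your separate degenerate-case paragraph (vanishing coefficients, $\pi(\beta)$ at a vertex) is then subsumed.
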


\begin{proof}
(i) 
The map $\xb\mapsto (a_1 x_1,\ldots,a_n x_n)$ is a bijective monomial change of variables for any $a_i \neq 0$, hence nonnegativity is preserved. It suffices to consider circuit polynomials. Under $T_{\ab}$, supports are unchanged and coefficients rescale as $c_{\alpb} \mapsto c_{\alpb} \ab^{\alpb}$.
Using the barycentric relation $\betab = \sum_{\alpb} \gamma_{\alpb} \alpb$, the circuit number satisfies $\Theta_{T_{\ab}f} = \Theta_f |\ab|^{\betab}$, while the inner coefficient becomes $c_{\betab} \ab^{\betab}$.
Thus the circuit inequality is invariant under $T_{\ab}$, since  $|d| \leq \Theta_f$ if and only if  $|d \ab^{\betab}| \leq \Theta_f |\ab|^{\betab}$,
 and SONC membership follows. See also~\cite{averkov2021remark}.

(ii) The substitution $x_{p_i} = k_i x_{q_i}$ with $k_i \ge 0$ induces a monomial map on exponents. In particular, outer exponents remain even and affine dependencies defining the circuit are preserved, so circuit polynomials are mapped to SONCs (up to coefficient rescaling by nonnegative factors).
Nonnegativity is preserved under the substitution, hence each circuit polynomial is still a SONC. The claim follows by conic closure.

(iii) Setting $x_i = r_i$ maps each monomial to a scaled monomial in the remaining variables, preserving circuit structure and nonnegativity. Hence circuit polynomials remain SONC (of lower degree and in less variables), and by conic closure the claim follows.
\end{proof}

\medskip

In addition to these algebraic invariances, the SONC cone is also stable under limits of convex combinations, which will be useful in the analysis of variational representations.

\begin{lemma}[Closure under integration]
\label{lm:SONC_integration}
Let $\{f_t(\xb)\}_{t \in [0,1]} \subseteq \mathbb{R}[\xb]_{n,2d}$ satisfy $f_t \in C_{n,2d}$ for all $t \in [0,1]$. Assume moreover that
$t \mapsto f_t$ is Riemann integrable as a map into the finite-dimensional
coefficient space $\mathbb{R}[\xb]_{n,2d}$; equivalently, all coefficient
functions of $f_t$ are Riemann integrable. Then
\[
p(\xb) \coloneqq \int_0^1 f_t(\xb)\,dt \in C_{n,2d}.
\]
That is, the SONC cone is closed under integration of SONC-valued polynomial families over compact intervals.
\end{lemma}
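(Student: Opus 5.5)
The proof rests on two facts: the SONC cone $C_{n,2d}$ is a closed convex cone in the finite-dimensional coefficient space $\R[\xb]_{n,2d}$, and Riemann integrals are limits of Riemann sums. Since $t\mapsto f_t$ is Riemann integrable coefficientwise, take the uniform partitions $t_i = i/N$, $i=0,\dots,N$. The Riemann sums
\[
S_N(\xb) \coloneqq \sum_{i=1}^N \frac{1}{N}\, f_{t_i}(\xb)
\]
converge coefficientwise to $p(\xb)=\int_0^1 f_t(\xb)\,dt$ as $N\to\infty$. Each $S_N$ is a nonnegative combination of elements of $C_{n,2d}$, so $S_N\in C_{n,2d}$ because the cone is closed under conic combinations. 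If $C_{n,2d}$ is topologically closed, then $p=\lim_N S_N\in C_{n,2d}$ and we are done.

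The main obstacle is therefore closedness of $C_{n,2d}$ in $\R[\xb]_{n,2d}$. I would prove it in three steps.

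\emph{Finite union of cones.} There are only finitely many circuits $(S,\betab)$ with $S\subseteq 2\N^n\cap\N^n_{2d}$ and $\betab\in\N^n_{2d}$, together with the monomial squares. So $C_{n,2d}$ is the Minkowski sum of finitely many cones $C_{(S,\betab)}$. Each $C_{(S,\betab)}$ is the set of nonnegative polynomials supported on that circuit.

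\emph{Each piece is closed.} A single cone $C_{(S,\betab)}$ is an intersection of $P_{n,2d}$ with a coordinate subspace, so it is closed. Equivalently, the circuit number condition $|d|\le\Theta_f$ with $c_\alpb\ge 0$ is a closed condition, by continuity of $\Theta_f$ on the closed orthant. Each piece is also pointed, since it lies inside $P_{n,2d}$.

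\emph{The sum is closed.} The Minkowski sum of finitely many closed convex cones is closed provided no nontrivial combination of their elements sums to zero. Here, if $\sum_j h_j=0$ with each $h_j\ge 0$ on $\R^n$, then every $h_j$ vanishes identically, hence $h_j=0$ as a polynomial. This is the standard sufficient condition for closedness of a finite sum of closed cones. To argue it directly: take a convergent sequence $\sum_j h_j^{(N)}\to p$ and bound each $h_j^{(N)}$ by evaluating at finitely many points, or by integrating against a Gaussian, which gives a norm on $P_{n,2d}$ dominated by the value for the sum. Extract convergent subsequences of the pieces, whose limits lie in the closed pieces.

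Alternatively, one can avoid closedness entirely with a direct Carathéodory argument. Writing $p$ as a Riemann limit still forces a limit step, however, so closedness of the cone is the essential ingredient in either approach.
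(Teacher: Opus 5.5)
Your proposal is correct and follows the paper's proof. The uniform Riemann sums are conic combinations of elements of $C_{n,2d}$, they converge coefficientwise to $p$, and closedness of $C_{n,2d}$ puts the limit in the cone; the paper simply cites closedness from the literature (Proposition 3.1.1 of Dressler's thesis), whereas your self-contained proof via a finite Minkowski sum of closed pointed circuit cones, and the fact that nonnegative polynomials summing to zero must all vanish, is also sound.
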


\begin{proof}
For $N\in\mathbb{N}$, consider the uniform partition
\[
0=t_0<t_1<\cdots<t_N=1, \qquad t_i=\frac{i}{N}, \qquad \Delta t=\frac{1}{N}.
\]
Choose tags $t_i^\star\in[t_{i-1},t_i]$. The corresponding Riemann sums
\[
p_N(\xb) \coloneqq \sum_{i=1}^N f_{t_i^\star}(\xb)\Delta t
\]
are finite nonnegative combinations of SONC polynomials and hence belong to
$C_{n,2d}$, since $C_{n,2d}$ is a convex cone.
By the assumed Riemann integrability of $t\mapsto f_t$, the Riemann sums $p_N$ converge coefficient-wise to
\[
p(\xb)=\int_0^1 f_t(\xb)\,dt
\]
as $N\to\infty$. Since $C_{n,2d}$ is closed, see e.g.~\cite[Proposition  3.1.1.]{dressler2018sums}, the limit also belongs to~$C_{n,2d}$. Thus $p\in C_{n,2d}$.
 \end{proof}

\medskip

These structural properties will be used repeatedly in the development of SONC-based convexity notions, which we introduce next.

\subsection{SONC-convexity}\label{subsec:SONC-convexity}

Now we turn toward two SONC-based notions of convexity. In contrast to SOS-convexity, which admits several equivalent characterizations~\cite{ahmadi2013complete}, SONC-convexity does not enjoy such equivalences in general. This motivates the distinction between first- and second-order variants.

A related phenomenon has recently been observed in the setting of polyconvexity. Fantuzzi et al.~\cite{FantuzziEtAl2026Polyconvexity}
introduce SOS-based strengthenings of different characterizations of polyconvexity and show that these strengthenings need not remain
equivalent. Since polyconvexity concerns convexity after lifting to the matrix minors, rather than ordinary convexity in the original
variables, their framework is complementary to the SONC-based notions considered here.

\subsubsection{First-order SONC-convexity} \label{subsubsec: first-order SONC-convexity}
Recall that a polynomial $f$ is convex on $\R^n$ if and only if the following function \[
\struc{g_f(\xb,\yb)} \coloneqq f(\xb) - f(\yb) - \nabla f(\yb)^\top(\xb-\yb) \in \R[\xb; \yb]
\]
is globally nonnegative. This leads to the following definition.
\vspace{0.5em}
\begin{definition}\label{def:first_order_sonc}
    Let $f(\xb) \in\R[\xb]_{n,2d}$. We say that  $f$ is \struc{first-order SONC-convex} if
    \[
g_f(\xb,\yb) \in C_{2n,2d}.
\]
\end{definition}

The following are first-order SONC-convex polynomials:

\begin{example}\label{ex:1order_SONC-conv}
\leavevmode
    \begin{enumerate}
\item The quadratic form $f(\xb)=\frac{1}{2}\sum_{i=1}^n x_i^2$, since 
\[
g_f(\xb,\yb)=\frac{1}{2}\sum_{i=1}^n (x_i-y_i)^2
\]
is SONC.
\item More generally, separable quadratic polynomials 
\[
f(\xb)=\sum_{i=1}^n \big(a_i(x_i-\xi_i)^2 + b_i x_i\big) + c,\quad a_i\ge 0,
\]
satisfy $g_f(\xb,\yb)=\sum_{i=1}^n a_i(x_i-y_i)^2$, hence are SONC.

The special case $\xi_i = 0$ for every $i$, yields $f = \sum_{i=1}^n (a_i x_i^2 + b_ix_i) + c$, $a_i \geq 0$, which is a first-order SDSOS-convex polynomial, see \cite[Example 5.2~(i)]{chuong2019new}.
\item Let $d\geq 2$ and
\[
f(\xb)=\sum_{i=1}^n \big(a_i x_i^{2d}+b_i x_i^{2d-2}\big), \quad a_i,b_i\ge0.
\]
Then $f$ is first-order SONC-convex. 
Indeed,
\begin{align*}
g_f(\xb,\yb)
={}&
\sum_{i=1}^n
a_i\bigl(
x_i^{2d}
-2d\,x_i y_i^{2d-1}
+(2d-1)y_i^{2d}
\bigr)\\
&+
\sum_{i=1}^n
b_i\bigl(
x_i^{2d-2}
-(2d-2)x_i y_i^{2d-3}
+(2d-3)y_i^{2d-2}
\bigr).
\end{align*}
Each nonzero summand is a boundary nonnegative circuit polynomial, thus, $g_f \in C_{2n,2d}$.

In particular, the special case $f(\xb)=\sum_{i=1}^n a_i x_i^{2d}$ recovers a basic class of first-order SONC-convex polynomials, which is also discussed in context of first-order SDSOS-convexity in \cite[Example 5.2~(ii)]{chuong2019new}.
    \end{enumerate}
\end{example}

The examples in Example~\ref{ex:1order_SONC-conv} show that separable even-degree polynomials naturally admit SONC certificates via decompositions into univariate circuit components. 
This suggests that first-order SONC-convexity may be closely tied to separable structure. However, the next example shows that even a very simple nonseparable quadratic form is not captured by first-order SONC-convexity.

\begin{example}\label{ex:NOT1order_SONC-conv}
Consider
\[
f(x_1,x_2)=x_1^2 + x_2^2 + x_1 x_2.
\]

A direct computation gives
\begin{align*}
g_f(\xb,\yb) &= (x_1-y_1)^2 + (x_2-y_2)^2 + (x_1-y_1)(x_2-y_2)\\
&= x_1^2+x_2^2+x_1x_2+y_1^2+y_2^2+y_1y_2
-2x_1y_1-x_1y_2-x_2y_1-2x_2y_2.
\end{align*}
The monomial square coefficients sum to $4$, while the absolute values of the non-square coefficients sum to $8$. Hence the necessary condition of~\cite[Theorem~5]{dressler2025study} fails. Therefore, $f$ is not first-order SONC-convex.   
\end{example}

\bigskip
A particularly strong consequence of first-order SONC-convexity is that, within this class, nonnegativity and SONC membership coincide.

\vspace{0.5em}
\begin{theorem}
\label{Theorem: f being nonnegative iff f being SONC}
  Let $f\in \R[\xb]_{n,2d}$ be first-order SONC-convex. Then 
  \[
f \ge 0 \quad \text{ if and only if } \quad f \in C_{n,2d}.
\]
\end{theorem}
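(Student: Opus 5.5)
The direction ``$f\in C_{n,2d}\Rightarrow f\ge 0$'' is immediate, since every nonnegative circuit polynomial is globally nonnegative and the SONC cone is a convex cone of such polynomials. So the content lies in the converse. Assume $f\ge 0$ on $\R^n$ and $g_f(\xb,\yb)\in C_{2n,2d}$. The plan is to exhibit $f$ as a restriction of $g_f$ plus a SONC remainder, and then invoke Lemma~\ref{lm:SONC_algebraic_closure}(iii), the constant restriction property.

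First, since $f$ is convex (as $g_f\ge 0$) and nonnegative, I would like to pick a point $\yb^\star$ where $f$ attains its infimum. Then $\nabla f(\yb^\star)=0$, and
\[
f(\xb)=g_f(\xb,\yb^\star)+f(\yb^\star),
\]
where $f(\yb^\star)\ge 0$ is a nonnegative constant, i.e.\ a monomial square (exponent $\mathbf 0\in 2\N^n$), hence SONC. By Lemma~\ref{lm:SONC_algebraic_closure}(iii), restricting the variables $y_i=y_i^\star$ in the SONC polynomial $g_f(\xb,\yb)\in C_{2n,2d}$ yields a SONC polynomial in $\xb$ of degree at most $2d$. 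Therefore $f\in C_{n,2d}$ by conic closure.

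The main obstacle is that the infimum of a nonnegative convex polynomial need not be attained in the usual sense. For polynomials, however, a convex polynomial that is bounded below does attain its minimum; this is a classical fact (Belousov--Klatte / Frank--Wolfe type results for convex polynomials). I would cite it. If one prefers to avoid it, there is an alternative: take a minimizing sequence $\yb^{(k)}$. Then
\[
f(\xb)=g_f(\xb,\yb^{(k)})+f(\yb^{(k)})+\nabla f(\yb^{(k)})^\top(\xb-\yb^{(k)}),
\]
and the linear term must be handled. This is awkward, since linear terms are not SONC. So I would try instead to argue that $\nabla f(\yb^{(k)})\to 0$ along a suitable sequence and that the restricted polynomials $g_f(\cdot,\yb^{(k)})+f(\yb^{(k)})$ have bounded coefficients converging to $f$. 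I would then use closedness of $C_{n,2d}$ (as in the proof of Lemma~\ref{lm:SONC_integration}) to pass to the limit.

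The attainment route is cleaner, so I would commit to it. The key points to verify are these: each restricted circuit polynomial stays a nonnegative circuit polynomial or a sum of monomial squares, which is precisely Lemma~\ref{lm:SONC_algebraic_closure}(iii); and degrees do not increase under restriction.
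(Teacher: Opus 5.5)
Your proof is correct and matches the paper's argument: take a global minimizer $\ub$ of the convex, bounded-below polynomial $f$ (so $\nabla f(\ub)=0$), restrict $g_f$ at $\yb=\ub$ via Lemma~\ref{lm:SONC_algebraic_closure}(iii) to get $f-f(\ub)\in C_{n,2d}$, and add back the nonnegative constant $f(\ub)$. Your explicit citation of the Belousov--Klatte attainment theorem justifies the step the paper only asserts, and the minimizing-sequence fallback is not needed.
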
  

\begin{proof}
The implication $f \in C_{n,2d} \Rightarrow f \ge 0$ is immediate.

Conversely, assume $f$ is nonnegative and first-order SONC-convex.
Then $f$ is convex and, being a convex polynomial of even degree that is bounded below, attains its global minimum at some $\ub$ with $\nabla f(\ub)=0$. By definition,
\[
g_f(\xb,\yb) \in C_{2n,2d}.
\]
By Lemma~\ref{lm:SONC_algebraic_closure} (iii), restricting $\yb=\ub$ yields
\[
g_f(\xb,\ub)=f(\xb)-f(\ub) \in C_{n,2d}.
\]
Since $f(\ub)\ge0$ and $C_{n,2d}$ is a cone, it follows that $f \in C_{n,2d}$.
\end{proof}

This notion will be instrumental for establishing another condition for first-level exactness of the bounded SOS+SONC hierarchy in Section~\ref{subsec: 1level_exactness_SONC-conv}.

\subsubsection{Second-order SONC-convexity}\label{Subsubsec: The second-order SONC-convexity}
We next introduce a Hessian-based notion of SONC-convexity.

\begin{definition}
\label{Definition: Second-order SONC-convexity}
    Let $f(\xb) \in \R[\xb]_{n,2d}$. Then $f$ is called \struc{second-order SONC-convex} if 
    \begin{align*}
        \struc{q_f(\xb,\yb)} \coloneqq \yb^\top H_f(\xb) \yb \in C_{2n,2d}.
    \end{align*}
\end{definition}

The following examples illustrate that second-order SONC-convexity is naturally satisfied by polynomials whose Hessian structure decomposes into circuit-compatible forms.
\begin{example}\label{ex:2order_SONC-conv}
\leavevmode
\begin{enumerate}
       \item Bivariate quadratic forms $f(x_1,x_2)=f(\xb)=\xb^\top Q \xb$ with $Q\succeq0$ are second-order SONC-convex, since 
    \[
q_f(\xb,\yb)=2\yb^\top Q \yb
\]
is a nonnegative quadratic in $\yb=(y_1,y_2)$, therefore $q_f \in \Sigma^h_{2,2}=C^h_{2,2}$ by~\cite[Theorem~3.1]{dressler2021real}.    
   \item We can generalize the bivariate case from (1) to the following result: \\  A quadratic form $f(\xb) \in \R[\xb]_{n,2}$ is second-order SONC-convex  if and only if it is  SONC. Indeed, if $f$ is SONC, then so is $q_f(\xb,\yb) = 2f(\yb)$. Conversely, if $q_f$ is SONC, then by restricting $\xb=0$ we obtain
$2f(\yb)\in C_{n,2}$, hence $f\in C_{n,2}$.
\item Let 
\[
f(x_1,x_2)=a(x_1^4+x_2^4)+b x_1^2 x_2^2,\quad a\ge0,\ 0\leq b\le6a.
\]
Then $f$ is second-order SONC-convex. Indeed, 
\[
q_f(\xb,\yb) = (12 ax_1^2y_1^2+12ax_2^2y_2^2+4bx_1x_2y_1y_2) + 2b (x_2y_1+x_1y_2)^2,
\]
where the first summand is a circuit polynomial with circuit number $24a$, so it is nonnegative when $4b\leq 24a$ and the second one is a binomial square.
Moreover, $b$ has to be nonnegative, since  otherwise $f$ is not convex because at $(x_1,x_2)=(0,1)$, the Hessian $H_f$ has diagonal entry $2b$.
\item For $f(\xb)=\sum_{i=1}^n a_i x_i^{2d}$ with $a_i\ge0$,
\[
q_f(\xb,\yb)=\sum_{i=1}^n a_i 2d(2d-1)x_i^{2d-2}y_i^2
\]
is a sum of monomial squares, hence SONC. In particular, this class is both first- and second-order SONC-convex, see Example~\ref{ex:1order_SONC-conv} (3).
    \end{enumerate}
\end{example}

These examples highlight that, in contrast to the first-order case, second-order SONC-convexity is governed by structural properties of the Hessian rather than direct decomposability of $f$ itself.
\medskip

For homogeneous polynomials, second-order SONC-convexity directly implies SONC.

\begin{theorem}\label{thm:second_order_form}
    For $2d\geq 2$, every second-order SONC-convex form is SONC.
\end{theorem}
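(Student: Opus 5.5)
The plan is to use Euler's identity for homogeneous polynomials together with the restriction closure of Lemma~\ref{lm:SONC_algebraic_closure}. Let $f$ be a form of degree $2d\geq 2$ with $q_f(\xb,\yb)=\yb^\top H_f(\xb)\yb\in C_{2n,2d}$. Applying Euler's relation twice gives
\[
\xb^\top H_f(\xb)\,\xb = 2d(2d-1)\, f(\xb).
\]
Hence it suffices to show that $q_f(\xb,\xb)$, the diagonal restriction $\yb=\xb$, is SONC in $\xb$, and then divide by the positive constant $2d(2d-1)$.

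To pass from $q_f(\xb,\yb)$ to $q_f(\xb,\xb)$, I would invoke Lemma~\ref{lm:SONC_algebraic_closure}~(ii) with the identifications $y_i = 1\cdot x_i$ for $i\in[n]$, i.e.\ $k_i=1\geq 0$. This yields $q_f(\xb,\xb)\in C_{n,2d}$. I expect a degree mismatch: $q_f$ has degree $2d$ in $(\xb,\yb)$ (degree $2d-2$ in $\xb$ and $2$ in $\yb$), and the substitution preserves the total degree, so the result indeed lies in $C_{n,2d}$. Then
\[
f=\tfrac{1}{2d(2d-1)}\,q_f(\xb,\xb)\in C_{n,2d}.
\]
The case $2d=2$ is consistent with Example~\ref{ex:2order_SONC-conv}~(2).

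The main obstacle is justifying (ii) carefully, since that is where the whole argument rests. Under $y_i\mapsto x_i$ the exponent map $(\alpb,\betab')\mapsto \alpb+\betab'$ is linear, so even outer exponents stay even and barycentric relations persist. However, the image of a circuit may become degenerate: points can collide, or $\conv(S)$ may cease to be a simplex. Should the lemma's short proof feel insufficient here, I would argue directly. The image of a nonnegative circuit polynomial $\sum_{\alpb\in S} c_\alpb \xb^\alpb + d\,\xb^\betab$ is an agiform-type polynomial: it is nonnegative, its outer terms are monomial squares with positive coefficients, and its single inner term lies in the convex hull of the images of $S$, with the same weights $\gamma_\alpb$. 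By weighted AM–GM, $\sum \gamma_\alpb (c_\alpb/\gamma_\alpb)\xb^{\alpb'} \geq \Theta_f |\xb^{\betab'}|$, and such a polynomial is SONC. This follows, after merging coinciding outer points, by the standard decomposition of a point in a polytope into simplicial circuits (Carathéodory plus the circuit-number inequality), or by citing the known fact that the SONC/SAGE cone equals the cone of nonnegative polynomials with at most one non-monomial-square term. Closing under conic combinations then finishes the proof.
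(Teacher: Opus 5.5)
Your proposal is correct and follows the paper's proof: Euler's identity gives $\xb^\top H_f(\xb)\xb = 2d(2d-1)f(\xb)$, and the diagonal identification $\yb=\xb$ via Lemma~\ref{lm:SONC_algebraic_closure}~(ii) carries SONC membership from $q_f$ to $f$. Your backup argument for (ii) is also sound and more careful than the paper's own proof of that lemma. It shows via weighted AM--GM that the image of a nonnegative circuit polynomial is a nonnegative polynomial with at most one non-monomial-square term, which is SONC by the known results.
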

\begin{proof}
Let $f$ be a form of degree $2d$. Euler's identity for homogeneous polynomials implies
  \begin{align*}
        \xb^\top \nabla f(\xb) = 2d\; f(\xb)
    \end{align*}
    and differentiating yields
\[
\xb^\top H_f(\xb)\xb = 2d(2d-1) f(\xb).
\]
Thus,
\[
f(\xb)=\frac{1}{2d(2d-1)} q_f(\xb,\xb).
\]
If $f$ is second-order SONC-convex, then $q_f(\xb,\yb)$ is SONC, and by closure under variable identification $\yb=\xb$ (Lemma~\ref{lm:SONC_algebraic_closure} (ii)), so is $q_f(\xb,\xb)$. The result follows.
\end{proof}

The above theorem is analogous to a classical result in SOS-convexity established in \cite{ahmadi2013complete}, which states that every SOS-convex form is SOS.

\bigskip
Second-order SONC-convexity is not stable under homogenization.

\begin{lemma}
\label{Lemma: 2nd SONC-convexity not closed under homogenization}
Second-order SONC-convexity is not preserved under homogenization.
\end{lemma}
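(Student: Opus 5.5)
The plan is to disprove preservation by an explicit counterexample: exhibit a univariate polynomial that is second-order SONC-convex, and show that its homogenization is not. The simplest candidate I would try first is
\[
f(x)=x^4+x^2, \qquad f^h(x,z)=x^4+x^2z^2 .
\]

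First I check that $f$ is second-order SONC-convex. We have $f''(x)=12x^2+2$, so
\[
q_f(x,y)=12x^2y^2+2y^2 .
\]
This is a sum of monomial squares, hence SONC, as in Example~\ref{ex:2order_SONC-conv}~(4).

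Next I compute the Hessian form of $f^h$ in the variables $(x,z;y_1,y_2)$:
\[
q_{f^h}=(12x^2+2z^2)y_1^2+8xz\,y_1y_2+2x^2y_2^2 .
\]
Its support has three even exponents, $(2,0,2,0)$, $(0,2,2,0)$ and $(2,0,0,2)$, and one non-square exponent, $\betab=(1,1,1,1)$, with coefficient $8$.

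The key step is to show $q_{f^h}\notin C_{4,4}$. By the cancellation-free, sparsity-preserving property of SONC decompositions recalled in Section~\ref{subsubsec:SONC}, any SONC representation uses only circuits whose outer exponents lie in the even points of $\New(q_{f^h})$. The only even lattice points of $\New(q_{f^h})$ are the three vertices above, so no other outer exponents are available. The inner term $8xzy_1y_2$ must therefore be absorbed by circuits $(S,\betab)$ with $S$ among these three points.

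I then solve for the barycentric coordinates of $\betab$. The $z$-coordinate of $\betab$ forces weight $1/2$ on $(0,2,2,0)$, and the $y_2$-coordinate forces weight $1/2$ on $(2,0,0,2)$. So the only admissible circuit is $S=\{(0,2,2,0),(2,0,0,2)\}$ with $\gamma=(1/2,1/2)$.

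On that circuit, at most the full coefficients $2$ and $2$ of $z^2y_1^2$ and $x^2y_2^2$ can be allocated. This gives a maximal circuit number of
\[
\Theta=\Bigl(\tfrac{2}{1/2}\Bigr)^{1/2}\Bigl(\tfrac{2}{1/2}\Bigr)^{1/2}=4<8 .
\]
Hence the circuit inequality $|d|\le\Theta$ of~\cite[Theorem 1.1]{iliman2016amoebas} fails, and $f^h$ is not second-order SONC-convex. As a sanity check, $f^h$ is still convex, so the failure is genuinely about the certificate rather than convexity.

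The main obstacle is making the non-membership argument airtight. One must justify rigorously that splitting the outer coefficients among several circuits cannot help. Since there is only one circuit through $\betab$, this reduces to the single AM--GM bound $8\le 2\sqrt{2\cdot 2}$, which is false. Alternatively, the necessary condition of~\cite[Theorem~5]{dressler2025study} could be tried, as in Example~\ref{ex:NOT1order_SONC-conv}. I expect the direct support argument to be cleaner here, because the coefficient sums ($16$ versus $8$) do not violate that condition.
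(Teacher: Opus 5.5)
Your proof is correct, but it takes a different route from the paper. The paper argues abstractly. Start from any nonconstant second-order SONC-convex $f$ and subtract a large constant; this leaves the Hessian unchanged and makes $f$ take a negative value. If $f^h$ were second-order SONC-convex, Theorem~\ref{thm:second_order_form} (via Euler's identity) would make $f^h$ SONC, hence nonnegative, contradicting $f^h(1,\xb)=f(\xb)<0$. That argument needs no computation and works for every such $f$ after a shift, but it only uses the nonnegativity obstruction.

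Your explicit example $f=x^4+x^2$ shows that preservation fails even when $f$ itself is nonnegative, indeed SONC, which the paper's argument cannot detect. Your circuit analysis is sound. $\New(q_{f^h})$ is the triangle on the three even exponents, and $\betab=(1,1,1,1)$ is the midpoint of $(0,2,2,0)$ and $(2,0,0,2)$. Splitting the outer coefficients over several circuit polynomials cannot help, e.g.\ by Cauchy--Schwarz, $\sum_i 2\sqrt{c_ic_i'}\le 2\sqrt{(\sum_i c_i)(\sum_i c_i')}\le 4<8$. You should state the sparsity fact as ``if $q_{f^h}$ is SONC, then it admits a decomposition with \ldots'', not ``any SONC representation uses \ldots''.

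However, your sanity check is wrong: $f^h=x^4+x^2z^2$ is \emph{not} convex. Its Hessian determinant is $(12x^2+2z^2)\cdot 2x^2-(4xz)^2=12x^2(2x^2-z^2)$, which is negative at $(x,z)=(1,2)$. Correspondingly, $q_{f^h}(1,2,1,-4)=20-64+32=-12<0$. So the failure in your example is a failure of convexity itself, not merely of the SONC certificate. This observation alone already proves that $f^h$ is not second-order SONC-convex, so the Newton polytope analysis is unnecessary. It also means your example exhibits a different obstruction from the paper's: there $f^h$ fails to be nonnegative, while here $f^h$ stays nonnegative but homogenization destroys convexity.
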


\begin{proof}
Let $f$ be a second-order SONC-convex polynomial, and let $f^h$ denote the homogenization of $f$. By subtracting a sufficiently large positive constant, which does not affect second-order SONC-convexity, we can assume that $f$ is not globally nonnegative. Suppose, for contradiction, that $f^h$ is second-order SONC-convex. Then by Theorem~\ref{thm:second_order_form}, $f^h$ is SONC, and hence nonnegative on $\mathbb{R}^{n+1}$.
In particular, evaluating at $x_0=1$ yields
\[
f^h(1,\xb) = f(\xb) \ge 0,
\]
contradicting the existence of a point where $f(\xb) < 0$. Therefore second-order SONC-convexity is not preserved under homogenization.
\end{proof}

\medskip

We also record a useful sufficient condition extending Theorem~\ref{thm:second_order_form} to nonhomogeneous polynomials, under a mild normalization at the origin.

\begin{lemma}
    Let $f(\xb)$ be a nonnegative, second-order SONC-convex polynomial satisfying $\nabla f(0) = 0$. Then $f \in C_{n,2d}$.
\end{lemma}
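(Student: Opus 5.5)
The plan is to use Taylor's formula with integral remainder around the origin. This writes $f$ as the constant $f(0)$ plus an integral of Hessian quadratic forms, and each integrand is a specialization of $q_f$. Since $\nabla f(0)=0$, the linear term drops out and we get
\[
f(\xb) \;=\; f(0) + \int_0^1 (1-t)\,\xb^\top H_f(t\xb)\,\xb \,dt \;=\; f(0) + \int_0^1 (1-t)\, q_f(t\xb,\xb)\,dt .
\]
This identity holds coefficient-wise in $\R[\xb]_{n,2d}$. It is the standard one-variable Taylor formula applied to $t\mapsto f(t\xb)$, which is a polynomial in $t$.

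Next I show that each integrand $f_t(\xb)\coloneqq (1-t)\,q_f(t\xb,\xb)$ lies in $C_{n,2d}$ for every $t\in[0,1]$. By assumption $q_f(\xb,\yb)\in C_{2n,2d}$. For $t\in(0,1]$, the map $(\xb,\yb)\mapsto(t\xb,\yb)$ is a diagonal scaling with nonzero factors, so Lemma~\ref{lm:SONC_algebraic_closure}~(i) gives $q_f(t\xb,\yb)\in C_{2n,2d}$. For $t=0$, Lemma~\ref{lm:SONC_algebraic_closure}~(iii), restricting $\xb=0$, gives $q_f(0,\yb)$ SONC. Then the identification $y_i = 1\cdot x_i$ for all $i$, an instance of Lemma~\ref{lm:SONC_algebraic_closure}~(ii) with $k_i=1$, gives $q_f(t\xb,\xb)\in C_{n,2d}$. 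The degree does not exceed $2d$, because identifying variables cannot raise the total degree. Multiplying by $1-t\ge 0$ keeps the polynomial in the cone.

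The coefficients of $f_t$ are polynomials in $t$, so $t\mapsto f_t$ is Riemann integrable. Lemma~\ref{lm:SONC_integration} then shows $\int_0^1 f_t\,dt\in C_{n,2d}$. Finally, $f(0)\ge 0$ by nonnegativity of $f$, so the constant $f(0)$ is a monomial square, hence SONC. Adding it keeps us in the cone, and so $f\in C_{n,2d}$. Nonnegativity of $f$ is used only at this last step.

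The step I expect to need the most care is the variable identification $\yb=\xb$. Here I rely on Lemma~\ref{lm:SONC_algebraic_closure}~(ii) exactly as in the proof of Theorem~\ref{thm:second_order_form}. I would check that merging several variables at once is allowed by applying the substitutions $y_i=x_i$ one at a time. The endpoint $t=0$ also needs separate handling, as above, since scaling by $0$ is not covered by part~(i).
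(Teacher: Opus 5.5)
Your proposal is correct and follows essentially the same route as the paper. Both arguments use the Taylor expansion with integral remainder at the origin, show that each integrand $(1-t)\,q_f(t\xb,\xb)$ is SONC, apply Lemma~\ref{lm:SONC_integration}, and finally add the constant $f(0)\geq 0$. The one difference is cosmetic: the paper gets $q_f(t\xb,\xb)$ in a single step from Lemma~\ref{lm:SONC_algebraic_closure}~(ii) with factors $k_i=t\geq 0$, which already covers $t=0$, so your split into a diagonal scaling, an identification with $k_i=1$, and a separate endpoint case is harmless but unnecessary.
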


\begin{proof}
Applying the second-order Taylor expansion of $f$ around the origin and invoking $\nabla f(0)=0$, leads to
\begin{align*}
    f(\xb) = f(0) + \int_0^1 (1-t)\xb^\top H_f(t\xb) \xb \, dt.
\end{align*}
Set 
\[
h_t(\xb)
\coloneqq
(1-t)\,\xb^\top H_f(t\xb)\xb .
\]
Since $f$ is second-order SONC-convex , we have $q_f(\xb,\yb)\in C_{2n,2d}$.
For every fixed $t\in[0,1]$, the polynomial
\[
h_t(\xb)=(1-t)\,q_f(t\xb,\xb)
\]
is obtained from $q_f$ by the nonnegative linear variable identification
$(\xb,\yb)\mapsto (t\xb,\xb)$ and multiplication by the nonnegative scalar
$1-t$. Hence, by Lemma~\ref{lm:SONC_algebraic_closure} (ii), we have $h_t\in C_{n,2d}$ for all $t\in[0,1]$. 
Moreover, since $H_f$ is a polynomial matrix, every coefficient of
$h_t(\xb)$ is a polynomial in $t$, and therefore Riemann integrable on
$[0,1]$. Thus Lemma~\ref{lm:SONC_integration} applies and gives
\[
\int_0^1 h_t(\xb)\,dt \in C_{n,2d}.
\]
Finally, invoking that $f$ is nonnegative, yields the claim.
\end{proof}

We briefly comment on the necessity of the normalization.
\begin{remark}\label{rem:2nd_order_SONC-conv_Nohomogeneous}
The normalization $\nabla f(0)=0$ in the previous lemma is essential for the above argument, and cannot be omitted in general. Indeed, without this assumption, the second-order integral representation yields
\[
f(\xb)=f(0)+\nabla f(0)^\top \xb + \int_0^1 (1-t)\,\xb^\top H_f(t\xb)\xb\,dt,
\]
where the integral term is SONC by second-order SONC-convexity and the closure properties established in Section~\ref{subsec:further study of the SONC cone}, and clearly the constant term $f(0)$ is SONC.
The remaining linear term, however, is not SONC unless it vanishes, since every SONC polynomial is globally nonnegative. Although such a linear term may sometimes be absorbed into a larger circuit polynomial, this is not possible in general.

For example, consider the quadratic polynomial
\[
        f(x_1,x_2)=(x_1+x_2-1)^2 .
\]
Then $f$ is nonnegative and second-order SONC-convex. Indeed, a direct computation yields
\[
        q_f(\xb,\yb)= 2(y_1+y_2)^2. 
\]
By~\cite[Theorem~3.1]{dressler2021real}, note that $q_f\in C^h_{2,2}$ since it is a homogeneous quadratic in $\yb$. See also Example~\ref{ex:2order_SONC-conv} (i).
However, $\nabla f(0,0)=(-2,-2)\neq 0$, and $f\notin C_{2,2}$. Thus the conclusion of the lemma fails without the normalization $\nabla f(0)=0$.
\end{remark}

\subsubsection{Relationship between the two notions}
We next compare the two notions of SONC-convexity. Both are motivated by classical characterizations of convexity, and both imply ordinary convexity.  However, they are not equivalent in general.

\smallskip

Using the second-order Taylor formula, we obtain
\[
g_f(\xb,\yb)=\int_0^1 (1-t)\, q_f\big(\yb+t(\xb-\yb),\,\xb-\yb\big)\,dt.
\]

This identity reveals the structural link between first- and second-order SONC-convexity: the first-order remainder $g_f$ can be expressed as an average of the quadratic forms defining second-order SONC-convexity.

However, this representation also explains the gap between the two notions. Even if $q_f(\xb,\yb)$ is SONC, the SONC cone is not stable under the substitution
\[
(\xb,\yb)\mapsto \big(\yb+t(\xb-\yb),\,\xb-\yb\big),
\]
and therefore SONC-ness of $q_f$ does not imply SONC-ness of $g_f$.

\medskip
In particular, second-order SONC-convexity does not imply first-order SONC-convexity, as the following example, adapted from \cite{chuong2019new},  demonstrates.

\begin{example}
\label{ex: Second-order but not first-order}

Consider again $f(x_1,x_2) = (x_1+x_2-1)^2$.   We have
    \begin{equation*}
        \begin{aligned}
        &g_f(\xb,\yb) = [(x_1+x_2)-(y_1+y_2)]^2, \quad
        &q_f(\xb,\yb) = 2(y_1+y_2)^2.
        \end{aligned}
    \end{equation*}
As shown in Remark~\ref{rem:2nd_order_SONC-conv_Nohomogeneous}, $f$ is second-order SONC-convex.  
On the other hand, $g_f$ has four square terms and six mixed terms, whose coefficients violate the necessary condition established in~\cite[Theorem~5]{dressler2025study}, for a polynomial to be SONC. Hence,
\[
g_f \notin C_{4,2}.
\]
Therefore, $f$ is not first-order SONC-convex.
\end{example}

Further examples of this phenomenon can be constructed, for instance
\[
f(x_1,x_2) = x_1^6 + x_2^6 + 2x_1^4x_2^2 + 2x_1^2x_2^4,
\]
which is second-order but not first-order SONC-convex. These examples confirm that, in general, second-order SONC-convexity does not imply first-order SONC-convexity.

\medskip
Next, we consider the converse direction. While the two notions are not equivalent in general, the reverse implication is only known in a low-dimensional special case.
\begin{lemma}
    When $(n,2d) = (1,4)$, every first-order SONC-convex polynomial is second-order SONC-convex.
\end{lemma}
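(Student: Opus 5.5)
The plan is to prove something stronger: for $(n,2d)=(1,4)$, second-order SONC-convexity is equivalent to ordinary convexity. Since first-order SONC-convexity implies convexity, the lemma then follows. First-order SONC-convexity means $g_f\in C_{2,4}$, and every SONC polynomial is globally nonnegative, so $g_f\ge 0$ on $\R^2$. By the characterization of convexity recalled before Definition~\ref{def:first_order_sonc}, $f$ is convex on $\R$, hence $f''\ge 0$ on $\R$.

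Next I compute $q_f$ explicitly. Write $f(x)=ax^4+bx^3+cx^2+ex+h$. Then
\[
q_f(x,y)=f''(x)\,y^2=12a\,x^2y^2+6b\,xy^2+2c\,y^2 ,
\]
a polynomial of degree at most $4$ in $(x,y)$, with support contained in $\{(2,2),(1,2),(0,2)\}$. Since $f''$ is a univariate polynomial of degree at most $2$ that is nonnegative on $\R$, we have $a\ge 0$ and $c\ge 0$, $b=0$ whenever $a=0$, and the discriminant satisfies $36b^2-96ac\le 0$.

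Now I split into cases. If $b=0$, then $q_f$ is a sum of monomial squares, hence SONC. If $b\neq 0$, then $a,c>0$, and $q_f$ is supported on the simplicial circuit with outer exponents $(2,2),(0,2)\in 2\N^2$ and inner exponent $(1,2)=\tfrac12(2,2)+\tfrac12(0,2)$. Its barycentric coordinates are $\gamma=(\tfrac12,\tfrac12)$, so the circuit number is
\[
\Theta_{q_f}=(24a)^{1/2}(4c)^{1/2}=\sqrt{96ac}.
\]
By the circuit criterion \cite[Theorem 1.1]{iliman2016amoebas}, $q_f$ is nonnegative, and thus a nonnegative circuit polynomial, exactly when $|6b|\le\sqrt{96ac}$. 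This is precisely the discriminant condition $36b^2\le 96ac$ obtained in the first step. Hence $q_f\in C_{2,4}$, so $f$ is second-order SONC-convex.

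The only delicate step is the circuit-number computation, where the barycentric normalization $c_{\alpb}/\gamma_{\alpb}$ must be handled correctly so that it reproduces the discriminant of $f''$ exactly. The degenerate cases also need checking: $a=0$ (which forces $b=0$), and equality in the discriminant, which gives a boundary circuit polynomial and is still admissible.
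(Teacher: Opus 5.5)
Your proof is correct and takes essentially the same route as the paper: convexity gives $f''\ge 0$, and then $q_f(x,y)=y^2f''(x)$ is SONC. The only difference is in the last step. The paper cites $P_{1,2}=C_{1,2}$ and notes that multiplying by the monomial square $y^2$ preserves SONC membership, whereas you check it directly with the circuit number $\Theta_{q_f}=\sqrt{96ac}$, which recovers exactly the discriminant condition on $f''$.
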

\begin{proof}
Every first-order SONC-convex polynomial $f$ is convex and thus its Hessian is positive semidefinite. For $(n,2d) = (1,4)$ that means, that the second derivative of $f$ satisfies, $f''(x)\geq 0$ for all $x\in \R$. Since $P_{1,2}=C_{1,2}$ by~\cite[Theorem~3.1]{dressler2021real}, $f''$ is SONC. Multiplication by the monomial square $y^2$ shifts all exponents by an even vector and therefore preserves SONC membership. Then,  $q_f(x,y)=y^2f''(x) \in C_{2,4}$, and $f$ is second-order SONC-convex.
\end{proof}

Thus, the reverse implication holds in this special case, but whether it extends to the general setting remains open and would be an interesting direction for future work.

\medskip

From an optimization perspective, first-order SONC-convexity is the more directly useful notion, as it yields certificate-based characterizations of nonnegativity and underlies the additional one-step convergence results established hereafter in Section~\ref{subsec: 1level_exactness_SONC-conv}. 

From a computational perspective, however, second-order SONC-convexity can be easier to test, since it is formulated in terms of the Hessian and may lead to smaller REP certificates. Moreover, second-order SONC-convexity captures structural properties of the Hessian and provides complementary insight into the geometry of the SONC cone.

\subsection{First-level exactness via SONC-convexity}\label{subsec: 1level_exactness_SONC-conv}

We now return to the B-SOS+SONC hierarchy and show that first-order SONC-convexity yields a new condition for first-level exactness.

\begin{theorem}
\label{thm:SONC-convex-first-level}
Assume that Slater's condition holds. If $f$ and $-g_j$, $j=1,\dots,m$, are first-order SONC-convex polynomials of degree at most $2k$, then the B-SOS+SONC hierarchy is exact at the first level, i.e.,
    \[ p_1^k = f_K^{\star}. \]
\end{theorem}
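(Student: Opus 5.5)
We follow the classical Lagrangian argument for first-level exactness (as in the SOS-convex case of~\cite{lasserre2013lagrangian} and the SDSOS case of~\cite{chuong2019new}), replacing the SOS certificate by a SONC one produced from first-order SONC-convexity. Since $p_1^k\le f_K^\star$ always holds, it suffices to exhibit multipliers $\lamb\ge 0$ at level $d=1$ such that $L_1(\xb,\lamb)-f_K^\star\in C_{n,2k}\subseteq(\Sigma+C)_{n,2k}$.

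\textbf{Step 1: KKT point.} First-order SONC-convexity implies $g_f\ge 0$, so $f$ and each $-g_j$ are convex. Hence $K$ is convex and the problem is a convex program. Because $K$ is compact and nonempty, a minimizer $\xb^\star$ exists. Slater's condition then yields KKT multipliers $\mu_j\ge 0$ with
\[
\nabla f(\xb^\star)=\sum_{j}\mu_j\nabla g_j(\xb^\star),\qquad \mu_j g_j(\xb^\star)=0 .
\]
At level $d=1$ the products indexed by $(\ab,\bb)\in\N_1^{2m}$ are exactly $1$, the $g_j$, and the $1-g_j$. We therefore set $\lambda_{e_j,0}=\mu_j$ and all other multipliers to zero, so that $L_1(\xb,\lamb)=f-\sum_j\mu_j g_j$ is the usual Lagrangian $\ell(\xb)$.

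\textbf{Step 2: SONC certificate for $\ell-f_K^\star$.} The function $\ell$ is a nonnegative combination of first-order SONC-convex polynomials. Since $g_\ell=g_f+\sum_j\mu_j g_{-g_j}$ and $C_{2n,2k}$ is a convex cone, $\ell$ is itself first-order SONC-convex. By the KKT conditions, $\nabla\ell(\xb^\star)=0$ and $\ell(\xb^\star)=f(\xb^\star)=f_K^\star$. Lemma~\ref{lm:SONC_algebraic_closure}~(iii) lets us restrict $\yb=\xb^\star$ in $g_\ell\in C_{2n,2k}$, which gives
\[
g_\ell(\xb,\xb^\star)=\ell(\xb)-\ell(\xb^\star)-\nabla\ell(\xb^\star)^\top(\xb-\xb^\star)=\ell(\xb)-f_K^\star\in C_{n,2k}.
\]
This is the same mechanism as in Theorem~\ref{Theorem: f being nonnegative iff f being SONC}. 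Consequently, $t=f_K^\star$ is feasible for~\eqref{eq: B-SOS+SONC-hierarchy} with $d=1$ and full support, so $p_1^k\ge f_K^\star$, and hence $p_1^k=f_K^\star$.

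\textbf{Main obstacle.} The delicate point is Step 2's use of Lemma~\ref{lm:SONC_algebraic_closure}~(iii). Restriction to constants must preserve SONC membership, with degree at most $2k$, for the specific polynomial $g_\ell$. The only inputs are that degrees do not exceed $2k$ and that the conic combination stays inside $C_{2n,2k}$; both are immediate. Attainment of the minimum and the existence of KKT multipliers come from compactness and Slater's condition, so they should be routine.
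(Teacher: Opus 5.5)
Your proposal is correct and follows essentially the same route as the paper's proof: KKT multipliers from compactness and Slater's condition, closure of first-order SONC-convexity under nonnegative combinations (which you justify explicitly via linearity of $f\mapsto g_f$), and restriction to $\yb=\xb^\star$ via Lemma~\ref{lm:SONC_algebraic_closure}~(iii), which gives $L_1(\xb,\lamb)-f_K^\star\in C_{n,2k}$. You also spell out the first-level multipliers $\lambda_{\mathbf e_j,\mathbf 0}=\mu_j$ inside the proof, whereas the paper makes this choice explicit only later, in Remark~\ref{rem:support-restricted-exactness}.
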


\begin{proof}
Since first-order SONC-convexity implies ordinary convexity, $f$ is convex and each $g_j$ is concave. Assumption~A gives existence of an optimal solution, and Slater's condition yields corresponding KKT
multipliers.
Let $\xb^{\star}$ be a primal optimal solution and let $\lamb = (\lambda_1,\dots,\lambda_m) \in \mathbb{R}_{\geq 0}^m$ be corresponding KKT multipliers. Define the Lagrangian
\[
L(\xb) \coloneqq f(\xb) - f_K^{\star} - \sum_{j=1}^m \lambda_j g_j(\xb).
\]
Because the cone of first-order SONC-convex polynomials is closed under nonnegative linear combination, $L$ is first-order SONC-convex.
Hence, 
\begin{align*}
   g_L (\xb,\yb) = L(\xb) - L(\yb) - \nabla L(\yb)^\top (\xb - \yb) 
\end{align*}
is SONC.
Evaluating at $\yb = \xb^{\star}$ and invoking the KKT conditions $\nabla L(\xb^{\star})=0$ and $L(\xb^{\star})=0$, yields $L(\xb) = g_L(\xb,\xb^{\star}) \in C_{n,2k}$ using Lemma~\ref{lm:SONC_algebraic_closure} (iii).
This provides a valid certificate at level $d=1$ of the full-support B-SOS+SONC hierarchy, implying $p_1^k = f_K^{\star}$.
\end{proof}

This result provides a direct SONC-based mechanism for first-level exactness, distinct from SOS-convexity-based approaches. The certificate arises from the first-order structure of the SONC cone rather than semidefinite representability.

\smallskip
It would be interesting to identify more general convexity-type conditions tailored to the SOS+SONC framework that still guarantee first-level exactness, and to understand whether such conditions can be formulated in a unified way beyond the present first- and second-order notions.
As a first step in this direction note the following. 
\begin{remark}\label{rem:mixedConvexitycertificate}
The proof of Theorem~\ref{thm:SONC-convex-first-level} is not specific to a purely SONC certificate. Indeed, one may replace the condition
\[
g_f(\xb,\yb)\in C_{2n,2d}
\]
by the mixed condition
\[
g_f(\xb,\yb)\in(\Sigma+C)_{2n,2d}.
\]
Observe that Lemma~\ref{lm:SONC_algebraic_closure}(iii), together with the evident closure of the SOS cone under constant restriction,
implies that the SOS+SONC cone is preserved under constant restriction as well. Thus, the same KKT argument shows that, under Slater's condition and the same degree assumptions, the mixed conditions
\[
g_f,\;g_{-g_1},\ldots,g_{-g_m}
\in(\Sigma+C)_{2n,2k}
\]
also imply
\[
p_1^k=f_K^\star.
\]
A systematic study of this mixed convexity condition and its relationship with SOS-convexity and first-order SONC-convexity is left for future work.
\end{remark}

To conclude this section, note that the preceding first-level exactness results have been stated for the full-support B-SOS+SONC hierarchy. The same conclusions hold for a support-restricted relaxation under a natural support-compatibility condition.

\begin{remark}\label{rem:support-restricted-exactness}
More precisely, let $\cA\subseteq\N_{2k}^n$, and denote by $p_{1,\cA}^k$ the first-level relaxation obtained by replacing $(\Sigma+C)_{n,2k}$ with $(\Sigma+C)_{n,2k}(\cA)$.
Suppose that one of the settings of Corollary~\ref{cor:SOS-convex-first-level}, Theorem~\ref{thm:SONC-convex-first-level}, or
Remark~\ref{rem:mixedConvexitycertificate} holds. Let $(\xb^\star,\lamb^\star)$ be an optimal KKT pair, and define the corresponding KKT residual by
\[
h^\star(\xb)
\coloneqq
f(\xb)-f_K^\star
-\sum_{j=1}^m\lambda_j^\star g_j(\xb).
\] 
If $\supp(h^\star)\subseteq\cA$, then $p_{1,\cA}^k=f_K^\star$. 

Indeed, in the three respective settings, the corresponding proof gives
\[
h^\star\in\Sigma_{n,2k},\qquad
h^\star\in C_{n,2k},\qquad\text{or}\qquad
h^\star\in(\Sigma+C)_{n,2k}.
\]
The support therefore implies $h^\star\in(\Sigma+C)_{n,2k}(\cA)$.
Choosing the first-level Krivine-Stengle multipliers according to 
\[
\widehat{\lambda}_{\mathbf e_j,\mathbf 0}
=\lambda_j^\star,
\qquad j=1,\ldots,m,
\]
and setting all remaining multipliers equal to zero yields
\[
L_1(\xb,\widehat{\lamb})-f_K^\star
=h^\star(\xb).
\]
Thus, $f_K^\star$ is feasible for $p_{1,\cA}^k$, and the reverse inequality follows from the validity of the relaxation.

In particular, the readily verifiable condition
\[
\{\mathbf 0\}
\cup\supp(f)
\cup\bigcup_{j=1}^m\supp(g_j)
\subseteq\cA
\]
is sufficient for support-restricted first-level exactness. The support condition above may, however, be strictly weaker because cancellations can occur in the KKT residual.
\end{remark}

\section{Numerical Experiments}
\label{sec:numerics}

In this section, we illustrate the B-SOS+SONC relaxation and evaluate its computational performance. 
We first present simple examples to demonstrate the mechanism of the relaxation, then comment on its implementation, and finally report numerical results.

\subsection{Illustrative examples}
We begin with three small-scale examples highlighting the behavior of the relaxation.

\begin{example}
Consider the polynomial optimization problem
\begin{equation}
\label{eq:example1}
\left\{
\begin{array}{ll}
\min & f(x,y)=x^4y^2+x^2+y^4-3x^2y^2 \\[0.3em]
\mathrm{s.t.}
& g_1(x,y)=x\geq0,\\
& g_2(x,y)=y\geq0,\\
& g_3(x,y)=1-x^2-y^2\geq0.
\end{array}
\right.
\end{equation}
Thus, $K$ is the quarter unit disk. In particular,
$0\leq g_j\leq1$ on $K$ for $j=1,2,3$, and
Assumption~A is satisfied.

Fix $k=3$ and let $\cA=\{\alpb \in \N^2\quantify |\alpb| \leq 6\}$.
Then the B-SOS+SONC relaxation reads
\[
p_d^3
=
\sup\left\{
t\in\R \quantify
L_d(x,y,\lamb)-t
\in(\Sigma+C)_{2,6}(\cA),\
\lamb\geq0
\right\}.
\]

A direct circuit number computation shows that $f$ is a nonnegative circuit polynomial, i.e., $f\in C_{2,6}(\cA)$.
Since $f$ is globally nonnegative and $f(0,0)=0$, we obtain
\[
f_K^\star=0.
\]
At order $d=1$, choosing $\lamb=0$ and $t=0$ yields
\[
L_1(x,y,\lamb)-t=f(x,y)\in C_{2,6}(\cA).
\]
Thus, $p_1^3\geq0$. Since every feasible value of the relaxation is a
lower bound for $f_K^\star$, it follows that
\[
p_1^3=f_K^\star=0.
\]

We finally compare this example with the assumptions of
Theorem~\ref{thm:SONC-convex-first-level}. The constraint polynomials
do satisfy the required first-order SONC-convexity conditions. Indeed,
$-g_1$ and $-g_2$ are affine, while
\[
-g_3(x,y)=x^2+y^2-1
\]
has first-order remainder
\[
g_{-g_3}\bigl((x,y),(u,v)\bigr)
=
(x-u)^2+(y-v)^2\in C_{4,2}.
\]
However, the objective is not convex, since
\[
\nabla^2 f(0,1)
=
\begin{pmatrix}
-4&0\\
\phantom{-}0&12
\end{pmatrix}.
\]
Consequently, $f$ is not first-order SONC-convex, and
Theorem~\ref{thm:SONC-convex-first-level} does not apply. This example
therefore shows that first-level exactness may also occur beyond the
scope of that theorem.
\end{example}

The next example illustrates the first-level exactness mechanism of Theorem~\ref{thm:SONC-convex-first-level}.

\begin{example}\label{ex:first-level-exactness-SONC-convex}
Consider the polynomial optimization problem
\begin{equation}\label{eq:first-level-exactness-SONC-example}
\left\{
\begin{array}{ll}
\min & f(x_1,x_2)=x_1^4+x_2^4-5x_1+8 \\[0.3em]
\text{s.t.}
& g_1(\xb)=x_1\geq 0,\\
& g_2(\xb)=x_2\geq 0,\\
& g_3(\xb)=1-x_1-x_2\geq 0.
\end{array}
\right.
\end{equation}
The feasible set is the standard two-dimensional simplex. In particular, Assumption~A holds, and Slater's condition is satisfied, 
for instance at $\xb=\left(\frac{1}{3},\frac{1}{3}\right)$.

Since the constraint polynomials are affine, each polynomial $-g_j$ is first-order SONC-convex. Moreover, the affine terms in $f$ do not
contribute to its first-order remainder, and hence 
\begin{align*}
g_f(\xb,\yb)
=(x_1^4-4x_1y_1^3+3y_1^4) + (x_2^4-4x_2y_2^3+3y_2^4).
\end{align*}
Each summand is a nonnegative circuit polynomial whose circuit inequality holds with equality. Therefore,
$g_f(\xb,\yb)\in C_{4,4},$ and $f$ is first-order SONC-convex.

The global minimizer is $\xb^\star=(1,0)$, with
\[
f_K^\star=f(1,0)=4.
\]
Indeed, $4x_1^3-5<0$ for every $x_1\in[0,1]$, while $x_2^4\geq0$.
The corresponding KKT multipliers for $g_1,g_2,g_3$ are
\[
\lamb^\star=(0,1,1).
\]
Therefore,
\begin{align*}
f(\xb)-f_K^\star
-\sum_{j=1}^3\lambda_j^\star g_j(\xb)
=
f(\xb)-4-g_2(\xb)-g_3(\xb)
=
x_1^4-4x_1+3+x_2^4.
\end{align*}
The polynomial $x_1^4-4x_1+3$ is a boundary nonnegative circuit polynomial, and $x_2^4$ is a monomial square. Thus,
\[
f(\xb)-f_K^\star
-\sum_{j=1}^3\lambda_j^\star g_j(\xb)
\in C_{2,4}.
\]
Hence, for $k=2$, the B-SOS+SONC hierarchy is exact at the first level:
\[
p_1^2=f_K^\star=4.
\]
\end{example}
Example~\ref{ex:first-level-exactness-SONC-convex} illustrates how the KKT multipliers combine the objective and the constraint polynomials
to produce an explicit SONC certificate at the first level.
\smallskip

The final example considers an objective polynomial that genuinely requires both components of the SOS+SONC cone.

\begin{example}\label{ex:mixed-SOS-SONC}
Consider the polynomial optimization problem
\begin{equation}\label{eq:mixed-SOS-SONC-example}
\left\{
\begin{array}{ll}
\min
& f(x,y,z)=(xy+xz+yz)^2+Q(x,y,z)+1 \\[0.3em]
\mathrm{s.t.}
& g_1(x,y,z)=x\geq0,\\
& g_2(x,y,z)=y\geq0,\\
& g_3(x,y,z)=z\geq0,\\
& g_4(x,y,z)=1-x^2-y^2-z^2\geq0,
\end{array}
\right.
\end{equation}
where
\[
\struc{Q(x,y,z)}
\coloneqq
x^2y^2+x^2z^2+y^2z^2+1-4xyz
\]
is the Choi-Lam polynomial. Thus, $K$ is the intersection of the nonnegative orthant with the unit ball. In particular,
$0\leq g_j\leq1$ on $K$ for $j=1,\ldots,4$, and Assumption~A is satisfied.

The polynomial $Q$ is a nonnegative circuit polynomial on the boundary of $C_{3,4}$.
Since $(xy+xz+yz)^2$ is a square and the constant $1$ is a monomial square, we obtain the explicit decomposition
\[
f
=
\underbrace{(xy+xz+yz)^2}_{\in\Sigma_{3,4}}
+
\underbrace{Q}_{\in C_{3,4}}
+
1
\in(\Sigma+C)_{3,4}.
\]
In fact,
\[
f\in
(\Sigma+C)_{3,4}
\setminus
\bigl(\Sigma_{3,4}\cup C_{3,4}\bigr),
\]
see~\cite[Lemma~4.2.8]{Schick2025-01-31squar-72682}.

Fix $k=2$ and let $\cA=\N_4^3$. The corresponding B-SOS+SONC relaxation is
\[
p_d^2
=
\sup\left\{
t\in\R\quantify
L_d(x,y,z,\lamb)-t
\in(\Sigma+C)_{3,4}(\cA),\
\lamb\geq0
\right\}.
\]
At level $d=1$, choosing $\lamb=0$ and $t=1$ yields
\[
L_1(x,y,z,\lamb)-1
=
f(x,y,z)-1
=
(xy+xz+yz)^2+Q(x,y,z)
\in(\Sigma+C)_{3,4}(\cA).
\]
As a consequence,
\[
p_1^2\geq1.
\]

The global optimal value can be determined exactly. Set
\[
\sigma_1=x+y+z,\qquad
\sigma_2=xy+xz+yz,\qquad
\sigma_3=xyz.
\]
Since
\[
x^2y^2+x^2z^2+y^2z^2
=
\sigma_2^2-2\sigma_1\sigma_3,
\]
we have
\[
f
=
2\sigma_2^2
-2\sigma_1\sigma_3
-4\sigma_3
+2.
\]
For $x,y,z\geq0$,
\[
\sigma_2^2
=
(xy+xz+yz)^2
\geq
3xyz(x+y+z)
=
3\sigma_1\sigma_3.
\]
It follows that
\[
f\geq2+4\sigma_3(\sigma_1-1).
\]
If $\sigma_1\geq1$, this gives $f\geq2$. If
$0\leq\sigma_1\leq1$, the arithmetic-geometric mean inequality gives
\[
\sigma_3\leq\left(\frac{\sigma_1}{3}\right)^3,
\]
and hence
\begin{align*}
f
\geq
2-\frac{4}{27}\sigma_1^3(1-\sigma_1)\geq
2-\frac{4}{27}\cdot\frac{27}{256}
=
\frac{127}{64},
\end{align*}
where we used
\[
\max_{0\leq s\leq1}s^3(1-s)=\frac{27}{256}.
\]
Equality is attained at
\[
(x^\star,y^\star,z^\star)
=
\left(\frac14,\frac14,\frac14\right),
\]
which belongs to $K$. Therefore,
\[
f_K^\star
=
\frac{127}{64}
=
1.984375.
\]
This exact value agrees with the numerical value $1.9844$ certified by GloptiPoly~3.

Numerically (see Table~\ref{tab:qdk-pdk-part2}, function $f_8$), the B-SOS+SONC implementation returns
\[
p_1^2\approx1.6574,
\]
whereas the corresponding first-level B-SOS bound is
\[
q_1^2\approx-24.6937.
\]
Thus, although the B-SOS+SONC relaxation is not exact at the first level for this problem, the additional SONC component produces a
substantially stronger lower bound.
\end{example}

The three examples illustrate complementary aspects of the B-SOS+SONC relaxation. The first example exhibits first-level exactness through an explicit SONC certificate, even though the objective does not satisfy the convexity assumptions of Theorem~\ref{thm:SONC-convex-first-level}. The second example showcases the first-level exactness mechanism guaranteed by that theorem. Finally, Example~\ref{ex:mixed-SOS-SONC} considers an
objective that belongs to the SOS+SONC cone but to neither constituent cone individually, and shows that the combined relaxation may
substantially improve the corresponding B-SOS bound already at low relaxation order.

\subsection{Implementation}
We implement the B-SOS+SONC relaxation in MATLAB, based on the modeling language YALMIP~\cite{yalmip}. Our solver of choice is MOSEK~\cite{mosek} as it allows to optimize over both the SDP and the REP cone. The code extends the SOS+SONC toolbox of Schick~\cite{sosplussonctoolbox}, originally designed for unconstrained polynomial optimization, to the constrained setting considered here. The implementation and all test instances are publicly available at 
\medskip

\centerline{\url{https://github.com/wenky2002/B_SOS_plus_SONC}}

\subsection{Computational experiments}
\label{subsec: Computational experiments}
We compare the (sparse) B-SOS, the B-SOS+SONC, and  Lasserre's hierarchies on a collection of polynomial optimization test problems, and solve them using the code for the sparse B-SOS~\cite{weisser2018sparse} relaxation, our toolbox, and \linebreak\texttt{gloptipoly3}~\cite{gloptipoly3}, respectively. 
For the B-SOS+SONC bounds, we use the full ambient support $\cA=\N^n_{2k}$ at certificate degree $k$.
The resulting lower bounds at the indicated relaxation orders are collected in Tables~\ref{tab:qdk-pdk} and~\ref{tab:qdk-pdk-part2}. As indicated, displayed generators are used with the two-sided normalization $0\leq g_j\leq1$; equivalently, the defining inequalities are $g_j\geq0$ and $1-g_j\geq0$. For Lasserre's hierarchy, we additionally report the first relaxation order at which the optimal value was obtained and successfully certified. 
All computations were executed in MATLAB R2022b on a laptop with a 2.5GHz Intel Core i5-1235U and 16GB of RAM.

\medskip

These numerical results illustrate the complementary roles of the SOS and SONC components. For the examples whose objectives are SONC but not SOS, the B-SOS+SONC relaxation can substantially improve upon B-SOS already at low relaxation orders. This is particularly apparent for $f_1$, $f_2$, $f_6$, and $f_{12}=m$, where the B-SOS bounds remain relatively far from the optimal value while the B-SOS+SONC bounds are essentially optimal. For $f_{12}$, for instance, the B-SOS bound at order $3$ 
is approximately $0.0005$, whereas the B-SOS+SONC already reaches the optimal value $0.5$ at order $2$. 
Similarly, for the examples combining SOS and SONC structure, such as $f_8$ and $f_9$, the hybrid relaxation improves the B-SOS bounds progressively and reaches values close to the corresponding Lasserre bounds at relatively low orders. Note that we did not compute higher order B-SOS+SONC relaxations for $f_8$ than $6$, since the computation of $p_6^2$ took $103.32$ hours. 
On the other hand, for instances with predominantly SOS structure, the two relaxations can behave similarly, as illustrated by $f_5$ and $f_{11}$.

The experiments also indicate some numerical limitations of the bounded-degree relaxations. In several instances, the higher-order B-SOS relaxations become numerically unstable: for $f_2$, $f_4$, $f_8$, $f_{10}$, and $f_{12}$, the final reported B-SOS relaxation exhibits coefficients or objective values of very large magnitude (order $O(10^{\ell})$ with $\ell\gg 1$), indicating a breakdown of numerical stability rather than meaningful growth of the relaxation bounds. In the case of $f_{10}$, the computed B-SOS sequence is moreover not monotone, despite the theoretical monotonicity of the hierarchy. This is another indication that the apparent behavior at these higher orders is caused by numerical inaccuracies. No analogous instability was observed within the B-SOS+SONC orders reported in the tables. 

For comparison, GloptiPoly~3 successfully finds and verifies the optimal value for all test instances except $f_9$. In particular, the reported Lasserre bounds attain the optimal value at order one for most examples, while higher orders are required for $f_2$, $f_8$, and $f_9$. For $f_9$, GloptiPoly~3 does not certify optimality within the orders considered, although the computed lower bound is already very close to the optimal value.

\medskip

Overall, the numerical results show that incorporating SONC structure into bounded SOS hierarchies can yield systematically tighter relaxations while preserving the tractability of SDP-based methods, making the proposed approach a promising alternative to purely SOS-based methods.

\clearpage
\begin{landscape}
\begin{table}[ht]
  \centering
  \captionsetup{justification=centering, margin=1cm}
   \caption{Comparison of sparse B-SOS, B-SOS+SONC, and Lasserre's hierarchy}
  \label{tab:qdk-pdk}
  \scriptsize
  \renewcommand{\arraystretch}{1.3}
  \setlength{\tabcolsep}{4pt}
  \setlength{\aboverulesep}{-0.3pt}
  \setlength{\belowrulesep}{-0.3pt}
  \setlength{\heavyrulewidth}{1.1pt}
  \setlength{\lightrulewidth}{1pt}
  \begin{tabularx}{\linewidth}{@{}
    >{\RaggedRight\arraybackslash}p{0.16\linewidth}
    >{\centering\arraybackslash}p{0.12\linewidth}
    >{\RaggedRight\arraybackslash}p{0.11\linewidth}
    @{\hspace{6pt}}
    !{\color{gray!60}\vrule width 0.8pt}
    @{\hspace{6pt}}
    >{\RaggedRight\arraybackslash}p{0.17\linewidth}
    @{\hspace{1pt}}
    >{\RaggedRight\arraybackslash}p{0.16\linewidth}
    @{\hspace{6pt}}
    >{\centering\arraybackslash}p{0.11\linewidth}
    >{\centering\arraybackslash}X
    @{}}
  \toprule
  \multicolumn{1}{l}{\multirow{2}{*}{\textbf{Objective function}}}
  & \multirow{2}{*}{\makecell[c] {\textbf{Cone}\\ \textbf{membership}}}
  & \multirow{2}{*}{\makecell[c]{\textbf{Constraint} \\[2pt] $0 \leq g_j \leq 1$}}
  & \multicolumn{1}{l}{\multirow{2}{*}{\textbf{B-SOS}}}
  & \multicolumn{1}{l}{\multirow{2}{*}{\textbf{B-SOS+SONC}}}
  & \multicolumn{2}{c}{\textbf{Lasserre's hierarchy}} \\
  \cmidrule(l){6-7}
  & & & & & \textbf{Value} & \makecell[c]{\textbf{Order,}\\\textbf{Optimal}} \\
  \midrule
  $f_1=x^4y^2 \!+\! x^2 \!+\! y^4 \!-\! 3x^2y^2$
  & $C_{2,6} \setminus \Sigma_{2,6}$
  & $\begin{aligned}[t]
      g_1 &= x \\
      g_2 &= y \\
      g_3 &= x^2\!+\!y^2
    \end{aligned}$
  & $\begin{array}[t]{r@{{}={}}l}
      q_1^3 & -0.4034 \\
      q_2^3 & -0.2633 \\
      q_3^3 & -0.0258 \\
      q_4^3 & -0.0105 \\
      q_5^3 & -3.2369\!\times\!10^{-4}
    \end{array}$
  & $\begin{array}[t]{r@{{}={}}l}
      p_1^3 & -2.4566\!\times\! 10^{-9}
   \end{array}$
  & $1.6157 \!\times\! 10^{-7}$ 
  & \text{$1$, yes}
  \\[4pt]
\arrayrulecolor{gray!35}\specialrule{0.4pt}{0pt}{0pt}
\arrayrulecolor{black}

  $f_2=x^4y^2\!+\!x^2y^4\!+\!z^6\!-\!3x^2y^2z^2$
  & $C_{3,6}\setminus \Sigma_{3,6}$
  & $\begin{aligned}[t]
      g_1 &= x \\
      g_2 &= y \\
      g_3 &= z \\
      g_4 &= x^2\!+\!y^2\!+\!z^2
    \end{aligned}$
  & $\begin{array}[t]{r@{{}={}}l}
      q_1^3 & -1.3371 \\
      q_2^3 & -1.2950 \\
      \multicolumn{2}{c}{\vdots}\\
      q_5^3 & -1.1676 \\
      q_6^3 & \text{unreliable}^{\dagger}
    \end{array}$
    & $\begin{array}[t]{r@{{}={}}l}
      p_1^3 & -4.8238\!\times\! 10^{-9}
   \end{array}$
     & $-1.8905 \!\times\! 10^{-8}$
     & \text{$4$, yes}
    \\[4pt]
\arrayrulecolor{gray!35}\specialrule{0.4pt}{0pt}{0pt}
\arrayrulecolor{black}

  $f_3= 1 \!-\! x^2y^2 \!+\! x^2y^4 \!+\! x^4y^2$
  & $C_{2,6}\setminus\Sigma_{2,6}$
  & $\begin{aligned}[t]
      g_1 &= x \\
      g_2 &= y
    \end{aligned}$
  & $\begin{array}[t]{r@{{}={}}l}
      q_1^3 & 0.9575 \\
      q_2^3 & 0.9610 \\
      \multicolumn{2}{c}{\vdots}\\
      q_7^3 & 0.9629 \\
      q_8^3 & 0.9630
    \end{array}$
  & $\begin{array}[t]{r@{{}={}}l}
      p_1^3 & 0.9630
   \end{array}$
  & $0.9630$
  & \text{$1$, yes}
    \\[4pt]
\arrayrulecolor{gray!35}\specialrule{0.4pt}{0pt}{0pt}
\arrayrulecolor{black}

  $f_4=x^{6}\!+\!y^{6}\!+\!1 \!+\!3x^{2}y^{2} \!-\!(x^{4}y^{2}\!+\!x^{4}\!+\!x^{2}y^{4}\!+\!x^{2}\!+\!y^{4}\!+\!y^{2})$
  & $ P_{2,6} \setminus (\Sigma\!+\!C)_{2,6}$
  & $\begin{aligned}[t]
      g_1 &= x \\
      g_2 &= y
    \end{aligned}$
  & $\begin{array}[t]{r@{{}={}}l}
      q_1^3 & -1.8619 \\
      q_2^3 & -1.1450 \\
      q_3^3 & -1.0312 \\
      q_4^3 & -1.0030 \\
      q_i^3 & -1.0000,\; i\in[5,9]\\
      q_{10}^3 & -0.9946\\
      \multicolumn{2}{c}{\vdots}\\
      q_{15}^3 & -0.9941\\
      q_{16}^3 & \text{unreliable}
    \end{array}$
  & $\begin{array}[t]{r@{{}={}}l}
      p_1^3 & -0.4433 \\
      p_2^3 & -0.0535 \\
      p_3^3 & -0.0111 \\
      p_4^3 & -4.8137 \!\times\! 10^{-4} \\
    \end{array}$
  & $-1.7985 \!\times\! 10^{-7}$
  & \text{$1$, yes}
    \\[4pt]
\arrayrulecolor{gray!35}\specialrule{0.4pt}{0pt}{0pt}
\arrayrulecolor{black}

  $f_5=(x\!-\!1)^2(x\!-\!2)^2$
  & $ \Sigma_{1,4} \setminus C_{1,4}$
  & $g_1=x$
  & $\begin{array}[t]{r@{{}={}}l}
  q_1^2 & -7.8595\!\times\! 10^{-9}\end{array}$
  & $\begin{array}[t]{r@{{}={}}l}
  p_1^2 & -3.2012\!\times\! 10^{-9}
  \end{array}$
  & $7.6576 \!\times\! 10^{-8}$
  & \text{$1$, yes}
    \\[4pt]
\arrayrulecolor{gray!35}\specialrule{0.4pt}{0pt}{0pt}
\arrayrulecolor{black}

  $Q=x^2y^2\!+\!x^2z^2\!+\!y^2z^2\!+\!1\!-\!4xyz$
  & $ C_{3,4} \setminus \Sigma_{3,4}$
  & $\begin{aligned}[t]
      g_1 &= x \\
      g_2 &= y \\
      g_3 &= z
    \end{aligned}$
  & $\begin{array}[t]{r@{{}={}}l}
      q_1^2 & -42.1307 \\
      q_2^2 & -0.0833 \\
      q_3^2 & 3.0982\!\times\! 10^{-7}
    \end{array}$
  & $\begin{array}[t]{r@{{}={}}l}
  p_1^2 & -1.6300\!\times\! 10^{-8}
  \end{array}$
  & $2.2262 \!\times\! 10^{-8}$
  & \text{$1$, yes}
    \\[4pt]
\arrayrulecolor{gray!35}\specialrule{0.4pt}{0pt}{0pt}
\arrayrulecolor{black}

  $m = x^4y^2 \!+\! x^2y^4\! +\! 1 \!- \!3x^2y^2$
  & $C_{2,6} \setminus \Sigma_{2,6}$
  & $\begin{aligned}[t]
      g_1 &= x \\
      g_2 &= y
    \end{aligned}$
  & $\begin{array}[t]{r@{{}={}}l}
      q_1^3 & -0.0713 \\
      q_2^3 & -0.0059 \\
      q_3^3 & 1.5019\!\times\! 10^{-7}
    \end{array}$
  & $\begin{array}[t]{r@{{}={}}l}
  p_1^3 & -2.0588\!\times\! 10^{-8}\end{array}$
  & $4.6032 \!\times\! 10^{-7}$
  & \text{$1$, yes}
  \\
  \bottomrule 
  \end{tabularx}
  
  \par\smallskip
  \begin{minipage}{\linewidth}
  	\footnotesize
  	$^\dagger$The solver returned a value of very large magnitude;
  	we regard this as an unreliable numerical output rather than a
  	valid lower bound.
  \end{minipage}
\end{table}
\end{landscape}

\clearpage
\begin{landscape}
\begin{table}[ht]
  \centering
  \captionsetup{justification=centering, margin=1cm}
  \caption{Comparison of sparse B-SOS, B-SOS+SONC, and Lasserre's hierarchy}
  \label{tab:qdk-pdk-part2}
  \scriptsize
  \renewcommand{\arraystretch}{1.3}
  \setlength{\tabcolsep}{4pt}
  \setlength{\aboverulesep}{-0.3pt}
  \setlength{\belowrulesep}{-0.3pt}
  \setlength{\heavyrulewidth}{1.1pt}
  \setlength{\lightrulewidth}{1pt}
  \begin{tabularx}{\linewidth}{@{}
    >{\RaggedRight\arraybackslash}p{0.16\linewidth}
    >{\centering\arraybackslash}p{0.16\linewidth}
    >{\RaggedRight\arraybackslash}p{0.09\linewidth}
    @{\hspace{6pt}}
    !{\color{gray!60}\vrule width 0.8pt}
    @{\hspace{6pt}}
    >{\RaggedRight\arraybackslash}p{0.18\linewidth}
    @{\hspace{1pt}}
    >{\RaggedRight\arraybackslash}p{0.13\linewidth}
    @{\hspace{6pt}}
    >{\centering\arraybackslash}p{0.11\linewidth}
    >{\centering\arraybackslash}X
    @{}}
\toprule
\multicolumn{1}{l}{\multirow{2}{*}{\textbf{Objective function}}}
& \multirow{2}{*}{\makecell[c] {\textbf{Cone}\\ \textbf{membership}}}
& \multirow{2}{*}{\makecell[c]{\textbf{Constraint} \\[2pt] $0 \leq g_j \leq 1$}}
& \multicolumn{1}{l}{\multirow{2}{*}{\textbf{B-SOS}}}
& \multicolumn{1}{l}{\multirow{2}{*}{\textbf{B-SOS+SONC}}}
& \multicolumn{2}{c}{\textbf{Lasserre's hierarchy}} \\
\cmidrule(l){6-7}
  & & & & & \textbf{Value} & \makecell[c]{\textbf{Order,}\\\textbf{Optimal}} \\
 \midrule

  $f_8=(xy \!+\! xz \!+\! yz)^2\! + \!1 \!+\! Q$
  & $(\Sigma\!+\!C)_{3,4} \setminus  (\Sigma_{3,4}\!\cup\!C_{3,4})$
  & $\begin{aligned}[t]
      g_1 &= x \\
      g_2 &= y \\
      g_3 &= z \\
      g_4 &= x^2\!+\!y^2\!+\!z^2
    \end{aligned}$
  & $\begin{array}[t]{r@{{}={}}l}
      q_1^2 & -24.6937 \\
      q_2^2 & 1.6063 \\
      q_3^2 & 1.7054 \\
      q_4^2 & 1.9595 \\
      q_5^2 & 1.9694 \\
      q_6^2 & \text{unreliable}
    \end{array}$
  & $\begin{array}[t]{r@{{}={}}l}
      p_1^2 & 1.6574 \\
      p_2^2 & 1.8499 \\
      p_3^2 & 1.9499 \\
      p_4^2 & 1.9677 \\
      p_5^2 & 1.9708\\
      p_6^2 & 1.9760
    \end{array}$
  & $1.9844$
  & \text{$3$, yes}
  \\[4pt]
  \arrayrulecolor{gray!35}\specialrule{0.4pt}{0pt}{0pt}
\arrayrulecolor{black}

  $f_9=\frac{1}{2}(1 \!+\! 2xy \!+\! x^2y)^2 \!+\! 2m$
  & $(\Sigma\!+\!C)_{2,6} \setminus (\Sigma_{2,6}\!\cup\!C_{2,6})$
  & $\begin{aligned}[t]
      g_1 &= x \\
      g_2 &= y \\
      g_3 &= x^2\!+\!y^2
    \end{aligned}$
  & $\begin{array}[t]{r@{{}={}}l}
      q_1^3 & 0.4805 \\
      q_2^3 & 2.2750 \\
      q_3^3 & 2.4061 \\
      q_4^3 & 2.4992\\
      q_5^3 & 2.5000
    \end{array}$
  & $\begin{array}[t]{r@{{}={}}l}
      p_1^3 & 1.0129 \\
      p_2^3 & 2.2800 \\
      p_3^3 & 2.4473\\
      p_4^3 & 2.4998\\
      p_5^3 & 2.5000
    \end{array}$
  & $2.4999$
  & \begin{array}[t]{r@{{},{}}l}
      \leq 15 & \text{unknown} \\
       16 & \text{no}
  \end{array}
  \\[4pt]
  \arrayrulecolor{gray!35}\specialrule{0.4pt}{0pt}{0pt}
\arrayrulecolor{black}

  $f_{10}= m \!+\! x^2$
  & $ C_{2,6} \setminus \Sigma_{2,6}$
  & $\begin{aligned}[t]
      g_1 &= x \\
      g_2 &= y \\
      g_3 &= x^2\!+\!y^2
    \end{aligned}$
  & $\begin{array}[t]{r@{{}={}}l}
      q_1^3 & 0.7058 \\
      q_i^3 & 0.7500,\; i\in[2,6]\\
      q_7^3 & 0.7382\\
      q_8^3 & 0.7301\\
      q_9^3 & \text{unreliable}
    \end{array}$
  & $\begin{array}[t]{r@{{}={}}l}
      p_1^3 & 0.8624 \\
      p_2^3 & 0.8750
    \end{array}$
  & $0.8750$
  & \text{$1$, yes}
  \\[4pt]
  \arrayrulecolor{gray!35}\specialrule{0.4pt}{0pt}{0pt}
\arrayrulecolor{black}

  $f_{11}= (1\!-\!x^2)^2 \!+\! (1\!-\!y^2)^2 \!+\! (1\!-\!z^2)^2 \!+\! Q$
  & $\Sigma_{3,4} \cap C_{3,4}$
  & $\begin{aligned}[t]
      g_1 &= x \\
      g_2 &= y \\
      g_3 &= z
    \end{aligned}$
  & $\begin{array}[t]{r@{{}={}}l}
  q_1^3 & -2.0552\!\times\! 10^{-9}
  \end{array}$
  & $\begin{array}[t]{r@{{}={}}l}
  p_1^3 & -1.6862\!\times\! 10^{-8}
  \end{array}$
  & $1.9448\! \times\! 10^{-8}$
  & \text{$1$, yes}
  \\[4pt]
  \arrayrulecolor{gray!35}\specialrule{0.4pt}{0pt}{0pt}
\arrayrulecolor{black}

  $ m = x^4y^2 \!+\! x^2y^4 \!+\! 1 \!-\! 3x^2y^2$
  & $ C_{2,6} \setminus \Sigma_{2,6}$
  & $\begin{aligned}[t]
      g_1 &= x \\
      g_2 &= y \\
      g_3 &= x^2\!+\!y^2
    \end{aligned}$
  & $\begin{array}[t]{r@{{}={}}l}
      q_1^3 & -3.3154 \\
      q_2^3 & -0.0103 \\
      q_3^3 & 4.8756\!\times\! 10^{-4} \\
      q_4^3 & 0.0102 \\
      \multicolumn{2}{c}{\vdots} \\
      q_8^3 & 0.0498 \\
      q_9^3 & \text{unreliable}
    \end{array}$
  & $\begin{array}[t]{r@{{}={}}l}
      p_1^3 & 0.4375 \\
      p_2^3 & 0.5000
    \end{array}$
  & $0.5000$
  & \text{$1$, yes}
  \\
  \bottomrule
  \end{tabularx}
\end{table}
\end{landscape}

\section{Conclusion}\label{sec:conclusion}

In this paper, we introduced the bounded-degree SOS+SONC hierarchy for constrained polynomial optimization, thereby combining the bounded-degree philosophy of Lasserre’s B-SOS relaxation with the additional expressive power of SONC certificates. The resulting framework preserves tractability through an SDP-REP reformulation, while enlarging the certificate space beyond SOS alone. We proved that the hierarchy is complete for every fixed certificate degree and that it leads to bounds that are at least as strong as those of the corresponding B-SOS relaxation. In addition, we initiated the study of the theory of SONC-convexity and showed that it gives rise to a new SONC-based first-level exactness mechanism for the B-SOS+SONC hierarchy, distinct from the classical SOS-convexity argument. 

Our numerical experiments confirm that the proposed hierarchy is effective in practice: it often improves upon B-SOS bounds already at low relaxation orders, while remaining computationally manageable. These results suggest that the combined SOS+SONC perspective is a promising direction for polynomial optimization, especially in situations where algebraic and sparse structures interact.

\smallskip
Several open questions remain.
First, a stopping criterion for B-SOS+SONC would be highly desirable, both from a theoretical and a computational point of view. In the SOS setting, such criteria are typically tied to dual moment formulations and flat extension phenomena; developing an analogous sound dual moment theory for SONC and SOS+SONC therefore appears to be a key step toward understanding both stopping criteria and solution recovery. For signomial optimization, a dual perspective for the related SAGE cone was already initiated by the first author in~\cite{Dressler:Murray}, where basic facts on the existence and uniqueness of solutions to signomial moment problems are established. Moreover, SAGE-based REP relaxations can also be used for solution recovery in signomial optimization; see~\cite[Section~3.2]{MCW2019}. However, a more streamlined SOS-like dual theory remains an important open direction.

Second, the role of sparsity in the combined SOS+SONC setting deserves further investigation. While our current support choice is convenient, it may be larger than necessary, especially for the SONC part, and does not fully exploit the sparsity-preserving property of SONC decompositions. 
In particular, our formulation~\eqref{eq:SOS+SONC_A-restricted_decomp} imposes a common support structure on the SOS and SONC parts, whereas there is no reason to expect that an optimal decomposition should satisfy such a restriction. Allowing the SOS and SONC summands to have distinct, problem-adapted supports could potentially lead to substantially smaller SDP-REP formulations.
A sharper understanding of how the supports of the SOS and SONC summands can be controlled in terms of the original polynomial could lead to a genuinely sparse version of the hierarchy and, in turn, to substantially larger-scale methods.

Third, it would be interesting to study whether the SONC cone remains closed under further natural restrictions, beyond the variable identifications and constant restrictions considered here, as this would shed more light on the structural robustness of the cone. 

Finally, a systematic study of convexity-type and other structural conditions adapted to the SOS+SONC cone that guarantee first-level
exactness remains an interesting direction for future work.
This includes clarifying the relationship between first- and second-order SONC-convexity. While we showed that second-order SONC-convexity does not imply first-order SONC-convexity, it is currently unknown whether the converse implication holds in general.
Another natural question is whether the mixed SOS+SONC condition discussed in Remark~\ref{rem:mixedConvexitycertificate} defines a genuinely larger class than the corresponding SOS- and SONC-based conditions, and whether it admits computationally effective tests.
More broadly, it would be interesting to determine whether these notions, together with further structural conditions for first-level exactness, can be embedded into a unified framework for certificate-based convexity.

We believe that these questions will be central for turning the present framework into a broader and more practical optimization methodology.

\medskip

\section*{Acknowledgments}
Mareike Dressler is supported by the Australian Research Council Discovery Early Career Award DE240100674. 
We thank Guoyin Li for valuable input and helpful comments during the early stages of this project. 
\medskip


\bibliographystyle{alpha}
\bibliography{references}

\end{document}